\title[Generic level sets in MCF]{Generic level sets in mean curvature flow are BV solutions}
\thanks{This preprint has not undergone peer review. There is a published version of this article in the Journal of Geometric Analysis, which is available online at \url{https://doi.org/10.1007/s12220-024-01819-y}}
\author{Anton Ullrich}
\thanks{A.U.\ (corresponding author) Max Planck Institute for Mathematics in the Sciences, Inselstraße 22, 
04103 Leipzig,
Germany. \nolinkurl{anton.ullrich@mis.mpg.de}}
\author{Tim Laux}
\thanks{T.L.\ Hausdorff Center for Mathematics and Institute for Applied Mathematics, University of Bonn, Villa Maria, Endenicher Allee 62, 53115 Bonn, Germany. \nolinkurl{tim.laux@hcm.uni-bonn.de}}
\date{}
\definecolor{webgreen}{rgb}{0,.5,0}
\definecolor{webbrown}{rgb}{.6,0,0}
\definecolor{RoyalBlue}{cmyk}{1, 0.50, 0, 0}
\newcommand*{\dd}{\mathop{}\!\mathrm{d}}
\DeclareMathOperator*{\esssup}{ess\,sup}
\theoremstyle{plain}
\newtheorem{defi}{Definition}[section]
\newtheorem*{thm*}{Theorem}
\newtheorem*{ex*}{Example}
\newtheorem*{defi*}{Definition}
\newtheorem*{remark*}{Remark}
\newtheorem*{prop*}{Proposition}
\newtheorem{thm}[defi]{Theorem}
\newtheorem{lemma}[defi]{Lemma}
\newtheorem{remark}{Remark}
\newtheorem*{lemma*}{Lemma}
\pgfplotsset{compat=newest}
\begin{document}

\begin{abstract}
    We show that a generic level set of the viscosity solution to mean curvature flow is a distributional solution in the framework of sets of finite perimeter by Luckhaus and Sturzenhecker, which in addition saturates the optimal energy dissipation rate.
    This extends the fundamental work of Evans and Spruck (J.\ Geom.\ Anal.\ 1995), which draws a similar connection between the viscosity solution and Brakke flows.
\end{abstract}

\keywords{Mean curvature flow, weak solution concepts, viscosity solution, distributional solution, compensated compactness.
\emph{MSC2020:} 53E10 (Primary); 35D40; 35D30 (Secondary)}

\maketitle

\section{Introduction}

The mean curvature flow is a parabolic geometric evolution equation with numerous applications in the sciences and engineering.
It is the most basic model describing slow relaxation driven by surface tension and can be viewed as the diffusion equation of surfaces relating the normal velocity $V$ and the mean curvature $H$ of an evolving surface via the equation $V=-H$.

\medskip

Since the mean curvature flow develops singularities in finite time, it is natural to consider weak solutions which are able to describe the evolution through singular events such as pinch-off. 
However, after such singularities, the flow may become non-unique~\cite{AngenentChoppIlmanen} and it is in general difficult to compare different weak solutions.
The aim of this paper is to better understand weak solutions and in particular draw a new connection between different weak solution concepts.
We will show that generically, they are consistent with each other.

\medskip

More precisely, we consider the viscosity solution introduced independently by Evans and Spruck~\cite{ESI}, and Chen, Giga and Goto~\cite{ChenGigaGoto}.
This concept is based on the level set formulation of Osher and Sethian~\cite{OsherSethian} in which all level sets of a function are evolved simultaneously (Figure~\ref{Fig:LevelSetFct}). 
We show that almost every level set of the viscosity solution is an evolving set of finite perimeter which moves according to the distributional formulation of mean curvature flow due to Luckhaus and Sturzenhecker~\cite{LS}. 
In addition, we show that the evolving sets of finite perimeter satisfy an optimal energy dissipation relation, which is crucial for the weak-strong uniqueness of the solution concept, cf.~\cite{FHLS}. 
Additionally, together with the distributional identity between normal velocity and mean curvature operator, the solution can be viewed as curve of maximal slope in the spirit of De~Giorgi, cf.~\cite{DeGiorgiMarinoTosques,AmbrosioGigliSavareBook}. 
More precisely, our solution also recovers the De~Giorgi formulation in the setting of BV solutions to mean curvature flow which was introduced by Otto and one of the authors~\cite{LauxOttoThreshMCFDeGiorgiMM} to describe limits of the thresholding scheme. 

\medskip

It is a by now classical result due to Evans and Spruck~\cite{ESI} that the viscosity solution is consistent with the classical solution in the sense that, when starting from the same initial data, the two solutions coincide on their common time interval of existence. 
More recently, such a consistency --- or weak-strong uniqueness --- has also been shown between BV solutions and the classical solution to mean curvature flow in~\cite{FHLS}.
Our result extends those statements by stating that also past singularities, when the classical solution ceases to exist, these two weak solution concepts are consistent with each other.  
More precisely, we prove the following statement.
\begin{thm}
Let $u\in C(\mathbb{R}^d\times [0,T))$ be a viscosity solution to mean curvature flow in the sense of Definition~\ref{Def:ViscositySolution} with well-prepared initial data according to Definition~\ref{def:wellprepared}. 
Then almost every level set of $u$ is a BV solution to mean curvature flow in the sense of Definition~\ref{Def:BVsol}.
\label{Thm:MainThm}
\end{thm}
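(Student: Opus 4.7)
\medskip

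\noindent\textbf{Proof plan.} The natural strategy is to work from a smooth approximation of the viscosity solution, derive the relevant identities pointwise on level sets, and then use the coarea formula together with a careful limit passage to pick out almost every level. Following Evans--Spruck, I would replace $u$ by the smooth solution $u_\varepsilon$ of the elliptic regularization
\begin{equation*}
\partial_t u_\varepsilon \;=\; \sqrt{|\nabla u_\varepsilon|^2+\varepsilon^2}\,\nabla\!\cdot\!\Bigl(\frac{\nabla u_\varepsilon}{\sqrt{|\nabla u_\varepsilon|^2+\varepsilon^2}}\Bigr),
\end{equation*}
which converges locally uniformly to $u$ and for which every level set is a classical hypersurface with scalar mean curvature $H_\varepsilon$ given by the bracketed divergence.

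The key identity at the smooth level is the sharp energy balance: differentiating $E_\varepsilon(t)=\int\sqrt{|\nabla u_\varepsilon|^2+\varepsilon^2}\,dx$ and using the PDE yields
\begin{equation*}
\frac{d}{dt}E_\varepsilon(t)+\frac12\!\int H_\varepsilon^{2}\sqrt{|\nabla u_\varepsilon|^2+\varepsilon^2}\,dx+\frac12\!\int\frac{(\partial_t u_\varepsilon)^2}{\sqrt{|\nabla u_\varepsilon|^2+\varepsilon^2}}\,dx=0,
\end{equation*}
which already has the De~Giorgi form of a curve of maximal slope. The smooth coarea formula rewrites each of the three terms as an integral over levels $s\in\mathbb{R}$ of, respectively, the perimeter of $\{u_\varepsilon>s\}$, the squared $L^2$-norm of $H_\varepsilon$ on $\{u_\varepsilon=s\}$, and the squared $L^2$-norm of the normal velocity. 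Well-preparedness of the initial data guarantees that $E_\varepsilon(0)$ converges to the sliced initial perimeter, so the limit of the global balance transfers to a saturated per-level dissipation after a Fubini extraction.

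Next I would derive the distributional velocity–curvature identity by testing the PDE against $\zeta\,\varphi'(u_\varepsilon)$ for $\zeta\in C_c^1(\mathbb{R}^d\times[0,T))$ and varying $\varphi\in C_c^1(\mathbb{R})$, which produces the relation level-by-level in smooth form. Uniform $\BV$ estimates in space and $L^2$ control on the velocity (both consequences of the energy balance and well-preparedness) provide weak-$\ast$ compactness of the associated perimeter and velocity measures, so the linear terms pass to the limit. To recover the distributional mean curvature of the limit level set from the divergence of the approximate Gauss map, I need strong $L^2$ convergence of $\nabla u_\varepsilon/\sqrt{|\nabla u_\varepsilon|^2+\varepsilon^2}$.

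The main obstacle is exactly this strong convergence, which is the classical Luckhaus--Sturzenhecker bottleneck. My plan is to close it by means of the energy identity itself, in the spirit of compensated compactness: by lower semicontinuity of each of the three dissipation functionals under weak-$\ast$ convergence of the sliced perimeter and velocity measures, combined with the upper bound transferred from the approximating identity, the limiting energy inequality must hold as an equality. This rigidity forces convergence of perimeters along a.e. slice, which upgrades the weak convergence of the approximate unit normals to strong convergence of the sliced Gauss maps, and hence identifies the limit of $H_\varepsilon$ as the distributional mean curvature of the limit level set. A concluding Fubini argument along a diagonal subsequence picks out a set of levels $s$ of full $\mathcal{L}^1$-measure on which $\{u>s\}$ satisfies the distributional MCF equation of Luckhaus--Sturzenhecker together with the saturated De~Giorgi inequality, which is the BV solution property of Definition~\ref{Def:BVsol}.
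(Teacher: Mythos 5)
Your overall architecture (vanishing-viscosity approximation, energy identity, coarea disintegration) matches the paper in spirit, but the step you yourself flag as the main obstacle --- upgrading weak to strong convergence of the regularized Gauss maps --- is not actually closed by the mechanism you propose, and this is a genuine gap.

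Your argument is: lower semicontinuity of perimeter and dissipation, plus the upper bound inherited from the $\varepsilon$-identity, force the limiting energy balance to hold with equality, and this rigidity yields strong convergence. But lower semicontinuity only gives $\liminf_\varepsilon E_\varepsilon(t)\geq E(t)$ and $\liminf_\varepsilon\!\int D_\varepsilon\geq\int D$, which combined with $E_\varepsilon(0)\to E(0)$ yields the one-sided inequality
\begin{equation*}
E(t)+\int_0^t D\,\dd\tau \;\leq\; E(0).
\end{equation*}
To promote this to an equality you would need a reverse inequality intrinsic to the limit, and establishing that is exactly the content of energy convergence --- the Luckhaus--Sturzenhecker hypothesis you are trying to verify, not deduce. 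In the ghost-interface scenario (a thin tentacle of positive area collapsing to a null set), both the limit perimeter and the limit dissipation can drop by the same amount, so the balance with equality simply fails while the inequality is trivially preserved. The argument as stated is therefore circular: rigidity would give strong convergence, but rigidity is equivalent to the strong convergence you want.

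What actually unlocks the strong convergence in the paper is the a priori bound from Evans--Spruck~\cite{ESIV}, $\sup_{t}\sup_{0<\varepsilon\leq1}\int_{\mathbb{R}^d}|H_\varepsilon|\,\dd x<\infty$, i.e.\ $\nabla\cdot\nu_\varepsilon=H_\varepsilon$ is uniformly bounded as a Radon measure. Writing $|\nabla u_\varepsilon|\approx -\nu_\varepsilon\cdot\nabla u_\varepsilon$, one passes to the limit in the product by a genuine div--curl argument: $\nu_\varepsilon$ has controlled divergence, $\nabla u_\varepsilon$ is a gradient with $u_\varepsilon\to u$ locally uniformly. This gives $|\nabla u_\varepsilon|\overset{*}{\rightharpoonup}|\nabla u|$ (rather than something larger), and hence, after coarea, strict convergence of a.e.\ level set. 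Your proposal never invokes this estimate, and without it the perimeter measures may concentrate a defect. Two further points you should also account for: (i) proving $\partial_t u=0$ a.e.\ on $\{\nabla u=0\}$, which is needed to make sense of $V$ and which in the paper uses Lemma~\ref{HsqrtEst}; and (ii) the disintegration in $s$ requires the relabeling property (Lemma~\ref{Relabeling}): testing against $\Phi'(u)$ is legitimate precisely because $\Phi\circ u$ is again the viscosity solution with well-prepared data $\Phi\circ g$, so all estimates apply uniformly; a bare ``Fubini extraction'' from the global identity only controls the $s$-average, not a.e.\ slice.
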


The idea of our proof is rather simple. 
We use the vanishing-viscosity approximation by Evans and Spruck~\cite{ESI}. 
The strengthened convergence necessary to carry out our argument comes from the surprising estimate from~\cite{ESIV} in the spirit of
\[
    \sup_{t>0} \int\limits_{\mathbb{R}^d} |H| \dd x <\infty.
\]
This makes possible a compensated compactness argument to show that the energies, which are modeled after
$\int_{\mathbb{R}^d} |\nabla u| \dd x$,
converge in the vanishing-viscosity limit.
By lower semi-continuity and the coarea-formula, this implies that almost every level set converges strictly, i.e., the perimeter does not drop down in the limit. 
Such an energy convergence is an essential tool for convergence proofs for approximations of mean curvature flow, see~\cite{LS, LauxOttoThreshMPMCF,LauxSimon}.
Therefore, it is a natural question whether one can verify the BV formulation also for the viscosity solution of the level set flow.
We do this by first deriving a distributional formulation of level set mean curvature flow that is satisfied by the viscosity solution. 
Then we disintegrate the level set parameter in this formulation. On a technical level, this is done by using the coarea-formula for surface-type integrals and the layer-cake formula for volume-type integrals.

\section{Definitions of the distributional and generic formulation}
\label{DistForm}

In this section, we recall the definition of the level set solution to mean curvature flow and how to generalize this concept to viscosity solutions. Afterwards, the notion of distributional solutions is stated and explained.

\begin{figure}[!ht]
    \centering
    \includegraphics[scale=0.3]{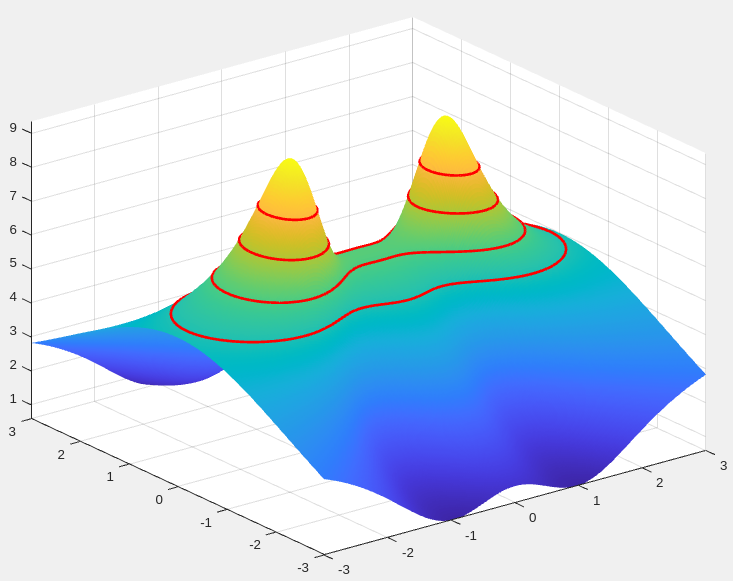}
    \caption{Example of a time-instance of a level set function. Level sets for four different values are marked in red.}
    \label{Fig:LevelSetFct}
\end{figure}

To give an intuition of the level set equation of mean curvature flow, let $u\colon\mathbb{R}^d\times [0,T)\to \mathbb{R}$ be a smooth function such that each level set evolves by mean curvature flow. 
We aim to find a partial differential equation for $u$. 
To this end, let $x(t)\in\mathbb{R}^d$, $t\in (t_1,t_2)\subset(0,T)$, be a trajectory following one level set, i.e, $u(x(t),t) = \textup{const.}$ and suppose $\nabla u(x(t),t) \neq 0$ for all $t\in(t_1,t_2)$. 
The outer unit normal vector to the super level sets is given by $\nu(x,t)=-\frac{\nabla u(x,t)}{|\nabla u(x,t)|}$ and since the mean curvature is the divergence of the outer unit normal, $H=\nabla\cdot \nu$, the mean curvature flow equation can be expressed by $\dot x\cdot \nu=-\nabla\cdot \nu$. Thus
\begin{align*}
0=\frac{\dd}{\dd t}u(x(t),t)
=\partial_t u+\dot x\cdot\nabla u
=\partial_t u+|\nabla u|\nabla\cdot\nu
\end{align*}
and hence, we obtain the level set equation for $u$:
$$\partial_t u=-|\nabla u|\nabla\cdot \nu=\Delta u-\frac{\nabla u}{|\nabla u|}\cdot \nabla^2 u\frac{\nabla u}{|\nabla u|}.$$

This leads to the following definition which is equivalent to the fact that the level sets of $u\in C^2$ evolve by mean curvature flow in regions where the gradient does not vanish, cf.~\cite{ESI}.

\begin{defi}[level set equation] A function $u\colon\mathbb{R}^d\times (0,T)\to\mathbb{R}$ with $ u\in C^1_t\cap C^2_x$ is called a solution to the \emph{level set equation} of mean curvature flow if it solves the initial value problem
\begin{align}
    \begin{cases}\partial_t u=\Delta u-\frac{\nabla u}{|\nabla u|}\cdot \nabla^2 u\frac{\nabla u}{|\nabla u|}&\text{in }\mathbb{R}^d\times (0,T), \{\nabla u\neq 0\},\\\partial_t u=0&\text{in }\mathbb{R}^d\times (0,T), \{\nabla u=0\},\\u=g &\text{on }\mathbb{R}^d\times\{t=0\}.\end{cases}
    \label{Eq:lvleq}
\end{align}
\end{defi}

This solution concept can be extended to the non-smooth case even after the onset of singularities using viscosity solutions, cf.~\cite{ESI, ChenGigaGoto}. These use the comparison principle of the mean curvature flow to control the evolution of the level sets. Here, one considers continuous level set functions. Intuitively, the idea is to take smooth functions locally approximating the level set function in an ordered way and impose the respective inequality on these smooth functions. This solution concept is consistent with the level set solutions for smooth functions $u$. One way to prove this is by the characterization using sub- and superjets, cf.~\cite[Section 2.3]{ESI}.

\begin{defi}[Viscosity solution] \label{Def:ViscositySolution}
A continuous function $u\in C(\mathbb{R}^d\times[0,T))$ is called a \emph{viscosity super-solution} to \eqref{Eq:lvleq} if, for any $\varphi\in C^\infty(\mathbb{R}^d\times(0,T))$ and $(x_0,t_0)\in \mathbb{R}^d\times(0,T)$ such that $u-\varphi$ has a local minimum at $(x_0,t_0)$, the following inequality holds,
\begin{enumerate}[label=\roman*)]
    \item for non-critical points $\nabla \varphi(x_0,t_0)\neq 0:$
    $$\partial_t \varphi\geq \Delta \varphi-\frac{\nabla \varphi}{|\nabla \varphi|}\cdot \nabla^2\varphi\frac{\nabla \varphi}{|\nabla \varphi|}\quad\text{at }(x_0,t_0)$$
    \item and for critical points $\nabla \varphi(x_0,t_0)=0$ there exists $\xi\in\mathbb{R}^d, |\xi|\leq 1:$
    $$\partial_t \varphi\geq \Delta \varphi-\xi\cdot \nabla^2\varphi \xi\quad\text{at }(x_0,t_0).$$
\end{enumerate}
We say that $u$ is a viscosity sub-solution if $-u$ is a viscosity super-solution. Finally, $u$ is called a viscosity solution if it is a viscosity sub- as well as a super-solution.
\end{defi}

Viscosity solutions are unique, i.e., it can be shown that there exists at most one viscosity solution to mean curvature flow, cf.~\cite[Theorem 3.2]{ESI}. However, the level sets may develop an interior, a phenomenon called fattening. This is analyzed, for example, in~\cite{angenent2002fattening}.

\medskip

Another way to extend the solution concept of mean curvature flow are so-called BV solutions due to Luckhaus and Sturzenhecker~\cite{LS}.
This definition uses distributional characterizations of the mean curvature and the normal velocity. 
Additionally, we impose a sharp energy dissipation which extends the definition of Luckhaus and Sturzenhecker.

In the following the Gau{\ss}-Green measure $\mu_\Omega$ denotes the (negative) spatial distributional derivative of the characteristic function $\mu_\Omega(t)=-\nabla \chi_{\Omega(t)}$ for every time $t\in (0,T)$ and $|\mu_\Omega(t)|$ denotes its total variation. Moreover, the perimeter of a set $\Omega(t)$ is defined as
$$P(\Omega(t)):=|\mu_\Omega(t)|(\mathbb{R}^d)=\sup\left\{\int\limits_{\Omega(t)}\nabla\cdot \xi\dd x\colon \xi\in C_c^1(\mathbb{R}^d;\mathbb{R}^d), |\xi|\leq 1\right\}.$$
Note that by De~Giorgi's structure theorem (cf.\ Theorem~\ref{DeGiorgiStructure}), integration against the Gau{\ss}-Green measure precisely corresponds to integration over the reduced boundary $\partial^* \Omega(t)$ with respect to the Hausdorff measure $\mathcal{H}^{d-1}$.

\begin{defi}[BV solution]
\label{Def:BVsol}
A family of finite perimeter sets $(\Omega(t))_{t\in [0, T)}$ s.t.\ $\bigcup\limits_{t\in [0,T)}\Omega(t)$ is measurable in space and time is called \emph{distributional solution to mean curvature flow} if the sets admit a uniform perimeter bound
\begin{align}
    \esssup\limits_{t\in [0,T)}P(\Omega(t))&<\infty\label{Eq:LSP}
\end{align}
and there is a $\dd |\mu_\Omega(t)|\dd t$-measurable $L^2$-bounded function $V\colon\mathbb{R}^d\times (0,T)\to \mathbb{R}$
\begin{align}
    \int\limits_{\mathbb{R}^d\times (0,T)}V^2\dd |\mu_\Omega(t)|\dd t&<\infty\label{Eq:LSV}
\end{align}
such that the following holds:
\begin{enumerate}[label=\roman*)]
    \item The function $V(\cdot, t)$ is the normal velocity of $\Omega(t)$ in the sense that for each test function $\zeta\in C_c^1(\mathbb{R}^d\times [0,T))$ it satisfies
    \begin{align}
        \int\limits_0^T\int\limits_{\Omega(t)}\partial_t\zeta\dd x\dd t&=-\int\limits_{\mathbb{R}^d\times(0,T)}\zeta V\dd |\mu_\Omega(t)|\dd t-\int\limits_{\Omega(0)}\zeta(x, 0)\dd x\label{Eq:LSdistV}.
    \end{align}
    \item Furthermore, for any test vector field $\xi\in C_c^1(\mathbb{R}^d\times(0,T);\mathbb{R}^d)$ it holds
    \begin{align}
        \int\limits_{\mathbb{R}^d\times(0,T)} (\nabla\cdot \xi-\nu\cdot \nabla \xi\nu) \dd |\mu_\Omega(t)|\dd t&=-\int\limits_{\mathbb{R}^d\times(0,T)} V\xi\cdot \nu\dd |\mu_\Omega(t)|\dd t.\label{LSdistMC}
    \end{align}
    \item For almost any time $T'\in (0, T)$ the sharp energy-dissipation holds
    \begin{align}
        P(\Omega(T'))+\int\limits_{\mathbb{R}^d\times (0, T')}V^2\dd |\mu_\Omega(t)|\dd t\leq P(\Omega(0)).\label{def:BVEnergyDissipation}
    \end{align}
\end{enumerate}
\end{defi}

Equation~\eqref{Eq:LSdistV} characterizes the normal velocity for smooth sets $\Omega(t)$, which is a direct consequence of the computation of $\partial_t\int_{\Omega(t)}\zeta\dd x$.
The operator on the left-hand side of Equation~\eqref{LSdistMC} encodes the mean curvature via an integration by parts on the surface, which can be easily seen for smooth sets.

This solution concept represents the mean curvature flow since $V$ is the distributional normal velocity and simultaneously the negative distributional mean curvature.

In addition to the distributional formulation by Luckhaus and Sturzenhecker, we also have a sharp energy-dissipation relation given by Inequality~\eqref{def:BVEnergyDissipation}.

\section{Recap of results by Evans and Spruck}

In this section, we recall the important results from \cite{ESI}, \cite{ESIII} and \cite{ESIV} that are needed to prove Theorem~\ref{Thm:MainThm}. We will consider the vanishing viscosity approximation to our level set formulation
\begin{align}
    \partial_t u_\varepsilon&=\Delta u_\varepsilon-\frac{\nabla u_\varepsilon}{\sqrt{|\nabla u_\varepsilon|^2+\varepsilon^2}}\cdot \nabla^2 u_\varepsilon\frac{\nabla u_\varepsilon}{\sqrt{|\nabla u_\varepsilon|^2+\varepsilon^2}}.\label{epsAppr}
\end{align}
This approximation is non-degenerate and has sufficient convergence to the level set solution as we will see in Theorem~\ref{conv}. In addition, it has a beautiful geometric interpretation as the mean curvature flow of graphs in a higher dimensional space, cf.~\cite{ESI}.

Throughout, we will assume that the initial datum for the  viscous approximation~\eqref{epsAppr} is well-prepared in the following sense.
\begin{defi}\label{def:wellprepared}
We call an initial level set data $g$ \emph{well-prepared} if $g\in C^3(\mathbb{R}^d)$, $g$ is constant outside a ball of radius $R>0$, and $g$ satisfies
\begin{align}
    \sup_{0<\varepsilon\leq 1} \int\limits_{\mathbb{R}^d} \Big|\nabla \cdot \Big(\frac{\nabla g}{\sqrt{|\nabla g|^2 + \varepsilon^2}}\Big) \Big| \dd x <\infty.
\end{align}
\end{defi}

\begin{remark}
We will call this integrand approximate mean curvature of $g$ and say that $g$ possesses an integrable approximate mean curvature with a uniform bound in $\varepsilon$ if the above bound holds.

Following~\cite[Lemma 2.1]{ESIV} one can construct well-prepared initial data $g$ for any given compact smooth $0$-level set.
\end{remark}

The viscous approximation \eqref{epsAppr} has better properties in the sense that it yields smooth solutions, cf.~\cite[Chapter 2]{ESIII}. For these, the following bounds on the solution and its derivatives hold.

\begin{lemma}[{\cite[Theorem 4.1]{ESI}}]
For well-prepared initial data, there exists a unique smooth solution $u_\varepsilon$ to \eqref{epsAppr}. Additionally, $|u_\varepsilon|,|\nabla u_\varepsilon|,|\partial_t u_\varepsilon|$ are uniformly bounded in $\varepsilon, x$ and $t$. \label{epsSol}
\end{lemma}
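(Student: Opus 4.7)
The plan is to treat \eqref{epsAppr}, for fixed $\varepsilon>0$, as a uniformly parabolic quasilinear equation and invoke standard theory for existence, uniqueness, and smoothness, and then derive the three $L^\infty$ bounds uniformly in $\varepsilon$ via the maximum principle applied to suitable linearizations. Writing the equation as $\partial_t u_\varepsilon = a^{ij}(\nabla u_\varepsilon)\partial_i\partial_j u_\varepsilon$ with $a^{ij}(p) = \delta^{ij} - p^i p^j/(|p|^2+\varepsilon^2)$, the eigenvalues of $(a^{ij}(p))$ lie in $[\varepsilon^2/(|p|^2+\varepsilon^2),\,1]$, so as long as $|\nabla u_\varepsilon|$ remains controlled the equation is strictly parabolic with smooth nonlinearity. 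Since $g\in C^3(\mathbb{R}^d)$ is constant outside a ball, standard Schauder theory for quasilinear parabolic equations on $\mathbb{R}^d$ yields a local smooth solution; the gradient estimate derived below allows bootstrapping to higher regularity and to global existence by continuation, while uniqueness follows from the comparison principle.

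For the $L^\infty$ bound on $u_\varepsilon$ itself, constants solve \eqref{epsAppr}, so the comparison principle directly gives $\inf g \leq u_\varepsilon \leq \sup g$. For the gradient bound, I would differentiate \eqref{epsAppr} in $x_k$: the function $\phi_k := \partial_k u_\varepsilon$ then satisfies a linear parabolic equation of the form $\partial_t \phi_k = a^{ij}(\nabla u_\varepsilon)\,\partial_i\partial_j \phi_k + b^l\,\partial_l \phi_k$ with no zeroth-order term and smooth, bounded coefficients. The maximum principle then gives $\sup_{x,t}|\partial_k u_\varepsilon| \leq \|\partial_k g\|_\infty$, and summing, $\sup|\nabla u_\varepsilon| \leq \sqrt{d}\,\|\nabla g\|_\infty$, a bound independent of $\varepsilon$.

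For the time-derivative bound I apply the same strategy to $w:=\partial_t u_\varepsilon$: differentiating \eqref{epsAppr} in $t$ produces another linear parabolic equation for $w$ without zeroth-order term, so the maximum principle yields $\sup_{x,t}|w| \leq \sup_x |w(\cdot,0)|$. The initial value is read off the equation at $t=0$: since $(a^{ij}(p))$ is bounded in operator norm by $1$, one obtains $|w(x,0)| = |a^{ij}(\nabla g)\,\partial_i\partial_j g| \leq C\,\|\nabla^2 g\|_\infty$, which is finite and $\varepsilon$-independent thanks to $g\in C^3$ with compact support of derivatives.

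The main technical obstacle throughout is justifying the maximum principle on the unbounded domain $\mathbb{R}^d$ for these linearized equations. This is handled by exploiting that $g$ is constant outside a compact set (so the solution and its derivatives are expected to decay appropriately at infinity), together with standard barrier arguments, e.g.\ comparing against $\pm M + \eta(|x|^2+1)$ and letting $\eta\to 0$, using the uniformly parabolic and smooth structure of the linearized equations. I emphasize that the full well-preparedness hypothesis on the approximate mean curvature is \emph{not} required for these basic estimates; it will become essential only for the perimeter and BV compactness arguments in the sequel.
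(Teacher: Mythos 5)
Your proposal follows essentially the same route as Evans and Spruck's original proof of their Theorem 4.1, which the paper cites without reproving: existence and smoothness from standard theory for the (nondegenerate, for fixed $\varepsilon$) quasilinear parabolic equation, the $L^\infty$ bound on $u_\varepsilon$ by comparison with constants, and the gradient and time-derivative bounds by differentiating \eqref{epsAppr} in $x_k$ and in $t$ and observing that the resulting linear equations for $\partial_k u_\varepsilon$ and $\partial_t u_\varepsilon$ have no zeroth-order term, so the maximum principle pins their sup to the initial sup. The structural observations you use --- eigenvalues of $(a^{ij}(p))$ in $[\varepsilon^2/(|p|^2+\varepsilon^2),1]$, operator norm $\leq 1$, $w(\cdot,0)=a^{ij}(\nabla g)\partial_i\partial_j g$ bounded since $g\in C^3$ with compactly supported derivatives --- are all correct and are what drive the $\varepsilon$-uniform estimates. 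Your closing remark that the full well-preparedness condition on the approximate mean curvature of $g$ is not needed here is also correct.

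The one place that needs more care is the maximum principle on the unbounded domain, precisely the point you flag. After differentiating, the drift coefficients $b^l=\tfrac{\partial a^{ij}}{\partial p^l}(\nabla u_\varepsilon)\,\partial_i\partial_j u_\varepsilon$ involve $\nabla^2 u_\varepsilon$, which is not a priori bounded on all of $\mathbb{R}^d\times(0,T)$; the quadratic barrier $\pm M+\eta(|x|^2+1)$ cannot absorb an unbounded drift, so the Phragm\'en--Lindel\"of argument as you sketch it does not close. The standard ways to fix this are (i) to run the whole argument on balls $B_R$ with $u_\varepsilon=g$ on $\partial B_R$ and let $R\to\infty$, exploiting that $g$ (hence all its derivatives) is constant outside a fixed compact set, or (ii) to use the interpretation of \eqref{epsAppr} as graphical mean curvature flow in $\mathbb{R}^{d+1}$ for $u_\varepsilon(x,t)-\varepsilon x_{d+1}$ and import the interior and decay estimates of Ecker--Huisken, which is essentially what Evans and Spruck do to supply the existence and the qualitative behavior at infinity before invoking comparison. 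Either route is routine, so this is a fixable technicality rather than a conceptual error; as written, though, the barrier step is not justified.
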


The central quantity of our argument will be the generalized normal velocity
\begin{equation}\label{eq:defV}
V:=\frac{\partial_t u}{|\nabla u|}\chi_{\{\nabla u\neq 0\}}.
\end{equation}
We will also need the generalized outer unit normal vector
$$\nu:=\begin{cases}-\frac{\nabla u}{|\nabla u|} & \text{if }\nabla u\neq 0,\\e_1 &\text{else.}\end{cases}$$
To prove the convergences of the vanishing viscosity approximation, the most crucial step is to utilize the following unexpected estimate due to Evans and Spruck~\cite{ESIV}.
\begin{thm}[Theorem 2.2 of \cite{ESIV}]
We have the uniform $L^\infty_tL^1_x$-bound
\begin{align}
    \sup\limits_{0\leq t\leq T}\sup\limits_{0<\varepsilon\leq 1}\int\limits_{\mathbb{R}^d}|H_\varepsilon (x,t)|\dd x<\infty,\label{strangeEquation}
\end{align}
where $H_\varepsilon(x,t):=\nabla\cdot\nu_\varepsilon$ is called the approximate mean curvature for the level sets of $u^\varepsilon$ and $\nu_\varepsilon=-\frac{\nabla u_\varepsilon}{\sqrt{|\nabla u_\varepsilon|^2+\varepsilon^2}}$ the approximate outer normal vector.
\label{strangeEst}
\end{thm}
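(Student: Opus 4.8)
The natural approach is geometric, through the higher-dimensional graph interpretation of~\cite{ESI}. Put $\tilde u_\varepsilon(x,x_{d+1},t):=u_\varepsilon(x,t)+\varepsilon x_{d+1}$ on $\mathbb{R}^{d+1}\times(0,T)$. A direct computation shows that $\tilde u_\varepsilon$ solves the \emph{nondegenerate} level set equation in $\mathbb{R}^{d+1}$, since $|\nabla\tilde u_\varepsilon|=\sqrt{|\nabla u_\varepsilon|^2+\varepsilon^2}\geq\varepsilon>0$; hence, using Lemma~\ref{epsSol}, each level set $\Sigma_t:=\{\tilde u_\varepsilon(\cdot,t)=c\}$ is a smooth hypersurface --- in fact the entire graph $\{x_{d+1}=(c-u_\varepsilon(x,t))/\varepsilon\}$ --- that evolves classically by mean curvature flow, and on it the modulus of the mean curvature equals $|H_\varepsilon(x,t)|$, independently of $x_{d+1}$ and of the level $c$. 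Orient $\Sigma_t$ so that its unit normal $\nu$ satisfies $\omega:=\langle\nu,e_{d+1}\rangle=\varepsilon/\sqrt{|\nabla u_\varepsilon|^2+\varepsilon^2}\in(0,1]$ (the tilt of the graph); then the area formula gives the exact identity
\[
\int_{\mathbb{R}^d}|H_\varepsilon(\cdot,t)|\dd x=\int_{\Sigma_t}|H|\,\omega\dd\mathcal{H}^d .
\]
So it is enough to bound, uniformly in $\varepsilon$ and $t$, the weighted total curvature of the evolving graph $\Sigma_t$ by its value at $t=0$; and at the initial time this is precisely the approximate-mean-curvature integral of $g$, hence bounded uniformly in $\varepsilon$ by well-preparedness (Definition~\ref{def:wellprepared}).

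To propagate the bound I would differentiate $t\mapsto\int_{\Sigma_t}|H|\,\omega\dd\mathcal{H}^d$ using the standard evolution equations along mean curvature flow: the area element satisfies $\partial_t\dd\mathcal{H}^d=-H^2\dd\mathcal{H}^d$, the mean curvature $(\partial_t-\Delta_{\Sigma_t})H=|A|^2H$, and the normal component of the fixed vector $e_{d+1}$ satisfies $(\partial_t-\Delta_{\Sigma_t})\omega=|A|^2\omega$ (equivalently, the Ecker--Huisken gradient function $v=1/\omega$ solves $(\partial_t-\Delta_{\Sigma_t})v=-|A|^2v-2v^{-1}|\nabla v|^2$). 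Two reductions make the bookkeeping clean. First, $u_\varepsilon$ is constant outside a ball $B_{R'}$ whose radius is independent of $\varepsilon$ and of $t\in[0,T]$ --- by comparison with large shrinking spheres, using that $g$ is constant outside $B_R$ --- so $H_\varepsilon$, and hence $|H|\,\omega$ on $\Sigma_t$, is compactly supported, and all integrations by parts on $\Sigma_t$ are free of boundary terms. Second, the soliton-adapted ratio $q:=H/\omega$ satisfies the \emph{reaction-free} drift-diffusion equation $\partial_tq=\Delta_{\Sigma_t}q+2\omega^{-1}\nabla\omega\cdot\nabla q$, so $|q|$ is a subsolution of the same equation; writing $|H|\,\omega=|q|\,\omega^2$ and integrating by parts, the divergence-type terms $\int_{\Sigma_t}\nabla\cdot(\omega^2\nabla|q|)\dd\mathcal{H}^d$ drop out. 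Collecting everything, one reaches a differential inequality of the schematic form
\[
\frac{\dd}{\dd t}\int_{\Sigma_t}|H|\,\omega\dd\mathcal{H}^d\le\int_{\Sigma_t}\Big(\big(2|A|^2-H^2\big)|H|\,\omega-2|q|\,|\nabla\omega|^2-2\,\omega\,\nabla|q|\cdot\nabla\omega\Big)\dd\mathcal{H}^d .
\]

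The crux --- and the step I expect to be the real obstacle --- is to show that this right-hand side is nonpositive (or at worst $\le C\int_{\Sigma_t}|H|\,\omega\dd\mathcal{H}^d$ with $C$ independent of $\varepsilon$, which together with $T<\infty$ and the initial bound still closes the argument by Gr\"onwall). The danger is the positive reaction term $2|A|^2|H|\,\omega$: the second fundamental form $|A|$ is not bounded uniformly in $\varepsilon$, and for $d\ge2$ it is not controlled pointwise by $|H|$. The cancellation must come from the graphical structure itself --- concretely from the Weingarten relation, by which $\nabla\omega$ is the second fundamental form contracted with the tangential part of $e_{d+1}$ (in particular $|\nabla\omega|\le|A|$ and $\nabla\omega$ is built from $A$) --- which should let one absorb the cross term $-2\,\omega\,\nabla|q|\cdot\nabla\omega$ into the good terms $-2|q|\,|\nabla\omega|^2$ and $-H^2|H|\,\omega$, after a Simons-type manipulation of $\nabla H$ via Codazzi; the balance should close exactly because $H$ and $\omega$ obey the \emph{same} linear parabolic equation on $\Sigma_t$. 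Making this absorption airtight is precisely the content of Theorem~2.2 of~\cite{ESIV}; I would model the computation on theirs (and on Ecker--Huisken-type estimates for graphical flow), handling the zero set $\{H=0\}$ by replacing $|H|$ with $\sqrt{H^2+\delta^2}$ and letting $\delta\downarrow0$ (which only improves the sign) and handling spatial infinity by the uniform compact support established above. Once the differential inequality holds, integrating in $t$ and inserting the uniform initial bound from the first paragraph gives $\sup_{0\le t\le T}\sup_{0<\varepsilon\le1}\int_{\mathbb{R}^d}|H_\varepsilon(\cdot,t)|\dd x<\infty$.
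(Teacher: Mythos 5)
Your setup is sound and matches the mechanism the paper invokes: the quantity $t\mapsto\int_{\mathbb{R}^d}|H_\varepsilon(x,t)|\dd x$ is non-increasing, and its value at $t=0$ is exactly the approximate mean curvature integral of $g$, which is uniformly bounded by well-preparedness. The graph reformulation, the identity $\int_{\mathbb{R}^d}|H_\varepsilon|\dd x=\int_{\Sigma_t}|H|\,\omega\dd\mathcal{H}^d$, the evolution equations for $H$ and $\omega$, and the observation that $q=H/\omega$ obeys a reaction-free drift-diffusion equation are all correct. But the proof does not close: the decisive step --- nonpositivity (or even a Gr\"onwall-compatible bound) of the right-hand side of your differential inequality --- is exactly what you do not establish. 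The term $+2|A|^2|H|\,\omega$ is genuinely uncontrolled in your framework: $|A|$ is not bounded uniformly in $\varepsilon$ and not dominated by $|H|$ for $d\geq 2$, and the Weingarten bound goes the wrong way for absorption ($|\nabla\omega|^2\leq|A|^2(1-\omega^2)$ is an \emph{upper} bound on the good term $-2|q||\nabla\omega|^2$, so it cannot dominate $2|A|^2|q|\,\omega^2$; already for $d=1$ the sum of these two terms is positive wherever $\omega^2>2/3$). You acknowledge this and write that making the absorption airtight ``is precisely the content of Theorem~2.2 of~\cite{ESIV}'' --- i.e., you defer the key step to the very theorem being proved, which is circular. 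As a proof, this is a genuine gap, not a technicality.

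The irony is that you already hold the right key and then set it aside. The clean route (and the one Evans--Spruck actually take) stays in Euclidean coordinates on $\mathbb{R}^d$: since $\partial_t u_\varepsilon=\sqrt{|\nabla u_\varepsilon|^2+\varepsilon^2}\,\nabla\cdot\bigl(\nabla u_\varepsilon/\sqrt{|\nabla u_\varepsilon|^2+\varepsilon^2}\bigr)$, differentiating the equation shows that $H_\varepsilon=\nabla\cdot\nu_\varepsilon$ itself satisfies a \emph{pure divergence-form} parabolic equation with no zeroth-order term (the Euclidean avatar of your reaction-free equation for $q$). Testing with $\beta'(H_\varepsilon)$ for a convex regularization $\beta$ of the absolute value chosen so that $\beta''(s)s\to 0$, one integrates by parts once and obtains $\frac{\dd}{\dd t}\int\beta(H_\varepsilon)\dd x\leq 0$ directly: no area element, no $|A|^2$, no Simons/Codazzi manipulation ever enters. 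By contrast, differentiating the weighted surface integral $\int_{\Sigma_t}|H|\,\omega\dd\mathcal{H}^d$ reintroduces $-H^2\dd\mathcal{H}^d$ and $|A|^2\omega$ terms that must then cancel against the cross term in a nontrivial way you have not exhibited. Either redo the monotonicity in the Euclidean divergence-form setting, or carry out and verify the cancellation you conjecture; as written, the central claim is asserted rather than proved.
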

This statement can be found in \cite[Theorem 2.2]{ESIV} and follows from the fact that the quantity $t\mapsto \int_{\mathbb{R}^d} |H_\varepsilon(x,t)| \dd x$ is non-increasing.
This uniform $L^\infty_tL^1_x$-bound on the approximate mean curvature has strong consequences as can be seen in the following theorem, cf.~\cite[Chapter 3]{ESIV}. For a subsequence, the approximate mean curvature $H_\varepsilon$ converges in the weak-$*$ topology of measures. Together with the relation to the approximate normal this yields the additional convergences in the following.

One can show that $u$ is bounded and Lipschitz in space and time, cf.~\cite{ESIII}. Thus, by Rademacher's theorem the derivatives are defined almost everywhere.

\begin{thm}[Convergences, {cf.~\cite[Section~2]{ESIII}, \cite[Section 3]{ESIV}}]
\label{conv}
For the vanishing viscosity solution $u_\varepsilon$, we have the following convergences:
\begin{align*}
    u_\varepsilon&\to u \text{ locally uniformly in }\mathbb{R}^d\times [0,T),\\
    \nabla u_\varepsilon&\overset{*}{\rightharpoonup} \nabla u\text{ in }L^\infty(\mathbb{R}^d\times [0,T)),\\
    \partial_t u_\varepsilon&\overset{*}{\rightharpoonup} \partial_t u\text{ in }L^\infty(\mathbb{R}^d\times [0,T)).
\end{align*}
Additionally, the following convergences also hold
\begin{align*}
    |\nabla u_\varepsilon|&\overset{*}{\rightharpoonup} |\nabla u|\text{ in }L^\infty(\mathbb{R}^d),\\
    \nu_\varepsilon&\overset{*}{\rightharpoonup} \nu\text{ in }L^\infty(\{\nabla u\neq 0\}),\\
    \nu_\varepsilon&\to\nu\text{ in }L^2(B_R\cap \{\nabla u\neq 0\})\forall R>0.
\end{align*}
\end{thm}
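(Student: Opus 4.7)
The plan is to combine the uniform bounds from Lemma~\ref{epsSol} with the Evans--Spruck mean-curvature estimate of Theorem~\ref{strangeEst}, organized into compactness and identification steps. First, the uniform Lipschitz bounds on $u_\varepsilon$ deliver, via Arzela--Ascoli, a locally uniform subsequential limit $\tilde u$. I would identify $\tilde u$ with the viscosity solution $u$ by the standard stability argument: a smooth test function touching $\tilde u$ from above (or below) at $(x_0,t_0)$ can be perturbed so that, for all small $\varepsilon$, it touches $u_\varepsilon$ at nearby points $(x_\varepsilon,t_\varepsilon)$, and the approximate PDE~\eqref{epsAppr} passes to the limit, with the critical-point case covered by choosing $\xi$ as the limit of $\nabla\varphi(x_\varepsilon,t_\varepsilon)/\sqrt{|\nabla\varphi(x_\varepsilon,t_\varepsilon)|^2+\varepsilon^2}$. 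Uniqueness of the viscosity solution then upgrades subsequential to full convergence. Banach--Alaoglu applied to the equi-bounded families $\nabla u_\varepsilon$ and $\partial_t u_\varepsilon$ yields weak-$*$ limits in $L^\infty$, which must coincide with $\nabla u$ and $\partial_t u$: the identity $\int \nabla u_\varepsilon\cdot\phi = -\int u_\varepsilon\,\nabla\cdot\phi$ passes to the limit for any $\phi\in C_c^\infty$ thanks to the uniform convergence of $u_\varepsilon$.

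The core technical step is the strong convergence $\nu_\varepsilon\to\nu$ in $L^2(B_R\cap\{\nabla u\neq 0\})$. Here I would exploit Theorem~\ref{strangeEst}: the bound $|\nu_\varepsilon|\le 1$ together with the $L^\infty_tL^1_x$-control on $H_\varepsilon=\nabla\cdot\nu_\varepsilon$ supplies compactness of BV-type in space, which combined with equicontinuity in time read off from~\eqref{epsAppr} yields, by an Aubin--Lions-type argument, a subsequence converging a.e.\ and in $L^1_{\mathrm{loc}}$ to some $\nu^*$. On $\{\nabla u\neq 0\}$, the defining relation $\sqrt{|\nabla u_\varepsilon|^2+\varepsilon^2}\,\nu_\varepsilon = -\nabla u_\varepsilon$ forces $\nu^*=\nu$ along any further subsequence on which $\nabla u_\varepsilon$ converges pointwise. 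The majorant $|\nu_\varepsilon-\nu|\le 2$ then upgrades the a.e.\ convergence to $L^2$ by dominated convergence.

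With this strong convergence secured, the remaining two limits follow by strong--weak pairing. The elementary estimate $\bigl| |\nabla u_\varepsilon|+\nabla u_\varepsilon\cdot\nu_\varepsilon \bigr|\le\varepsilon$ shows that the weak-$*$ limit of $|\nabla u_\varepsilon|$ coincides with that of $-\nabla u_\varepsilon\cdot\nu_\varepsilon$; on $\{\nabla u\neq 0\}$ the product of the strongly $L^2$-convergent $\nu_\varepsilon$ and the weakly-$*$ convergent $\nabla u_\varepsilon$ yields $-\nabla u\cdot\nu=|\nabla u|$. The weak-$*$ convergence $\nu_\varepsilon\overset{*}{\rightharpoonup}\nu$ on $\{\nabla u\neq 0\}$ then follows \emph{a fortiori} from the $L^2$-convergence. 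I expect the main obstacle to be producing the strong $L^2$-convergence of $\nu_\varepsilon$: the $L^1$-bound on $\nabla\cdot\nu_\varepsilon$ is strictly weaker than a full BV-bound on the vector field $\nu_\varepsilon$, so the compactness argument must exploit the unit-length/gradient-like structure of $\nu_\varepsilon$ in the limit, following the delicate analysis of~\cite[Section~3]{ESIV}.
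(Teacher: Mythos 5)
The first half of your argument---Arzel\`a--Ascoli for $u_\varepsilon$, stability of viscosity solutions, uniqueness to upgrade subsequential to full convergence, and Banach--Alaoglu plus the distributional identification for $\nabla u_\varepsilon$ and $\partial_t u_\varepsilon$---matches the paper and is fine. The core step, however, has a genuine gap that your closing caveat flags but does not repair. Theorem~\ref{strangeEst} controls only $\nabla\cdot\nu_\varepsilon$ in $L^\infty_tL^1_x$; this does \emph{not} give compactness of the vector field $\nu_\varepsilon$ itself, since oscillations tangential to the level sets are invisible to the divergence (for instance $\nu_\varepsilon(x)=f_\varepsilon(x_2)e_1$ with $|f_\varepsilon|\le 1$ is divergence-free yet can oscillate arbitrarily). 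So the proposed ``BV-type compactness plus Aubin--Lions'' step producing a.e.\ convergence of $\nu_\varepsilon$ does not go through. In addition, your identification of $\nu^*$ with $\nu$ relies on ``a further subsequence on which $\nabla u_\varepsilon$ converges pointwise''; you only have weak-$*$ convergence of $\nabla u_\varepsilon$ in $L^\infty$, and pointwise convergence is not available---it is essentially equivalent to what you are trying to establish.

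The paper avoids both problems by passing to the limit not in $\nu_\varepsilon$ but in the scalar $-\nu_\varepsilon\cdot\nabla u_\varepsilon\approx|\nabla u_\varepsilon|$ via a div--curl type compensated-compactness argument: integrate by parts to move the derivative off $\nabla u_\varepsilon$ onto $u_\varepsilon$, which converges \emph{uniformly}, while $\nabla\cdot\nu_\varepsilon=H_\varepsilon$ is uniformly bounded as Radon measures. This identifies the weak-$*$ limit of $|\nabla u_\varepsilon|$ with $-\nu^*\cdot\nabla u$, where $\nu^*$ denotes the weak-$*$ limit of $\nu_\varepsilon$. On $\{\nabla u\neq 0\}$ the resulting equality $|\nabla u|=-\nu^*\cdot\nabla u$ combined with $|\nu^*|\le 1$ forces $\nu^*=\nu$; the strong $L^2$-convergence then drops out of the unit-length rigidity,
\[
\int_{B_R\cap\{\nabla u\neq 0\}}|\nu_\varepsilon-\nu|^2\dd x\leq 2\int_{B_R\cap\{\nabla u\neq 0\}}(1-\nu_\varepsilon\cdot\nu)\dd x\longrightarrow 0,
\]
since $|\nu_\varepsilon|\le 1$ and $|\nu|=1$ there. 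So the logical order is reversed relative to your plan: the weak-$*$ convergence $|\nabla u_\varepsilon|\overset{*}{\rightharpoonup}|\nabla u|$ and the identification $\nu^*=\nu$ come first, from compensated compactness, and the strong $L^2$-convergence of $\nu_\varepsilon$ is a consequence, not a cause.
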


The first assertions follow from Arzel\`a-Ascoli and Banach-Alaoglu under a subsequence when using Lemma~\ref{epsSol}.

The key argument for the rest of the theorem is that once one writes
\begin{align*}
    |\nabla u_\varepsilon| \approx - \nu_\varepsilon \cdot\nabla u_\varepsilon,
\end{align*}
one can pass to the limit in the product on the right-hand side via a compensated compactness argument using Theorem~\ref{strangeEst} since $u_\varepsilon \to u$ uniformly and $\nabla \cdot \nu_\varepsilon = H_\varepsilon$ is uniformly bounded in $\varepsilon$ as Radon measures.
Another useful lemma is the following, which tells us that certain integral quantities are bounded. It will be important in taking the limit and proving estimates for $u$ given the corresponding estimates for $u_\varepsilon$.

This is Theorem 4.1 in \cite{ESIV} and follows from the bounds of $u_\varepsilon$ and its derivatives (Lemma~\ref{epsSol}).
\begin{lemma}[{\cite[Theorem 4.1]{ESIV}}]
Let $K\subset \mathbb{R}^d\times(0,T)$ be a compact set. Then the following bound on the approximate mean curvature holds
$$\sup\limits_{0<\varepsilon\leq 1}\int\limits_0^T\int\limits_{\mathbb{R}^d}\chi_{K}|H_\varepsilon|^2\sqrt{|\nabla u_\varepsilon|^2+\varepsilon^2}\dd x\dd t<\infty.$$
\label{HsqrtEst}
\end{lemma}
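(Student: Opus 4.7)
The plan is to reduce the weighted $L^2$-estimate on $H_\varepsilon$ to a product of two uniform bounds already at our disposal: the $L^\infty_{x,t}$-bound on $\partial_t u_\varepsilon$ from Lemma~\ref{epsSol} and the uniform $L^\infty_t L^1_x$-bound on $H_\varepsilon$ from Theorem~\ref{strangeEst}. The bridge between the integrand $|H_\varepsilon|^2\sqrt{|\nabla u_\varepsilon|^2+\varepsilon^2}$ and these quantities is an exact algebraic identity built into the structure of the viscous equation~\eqref{epsAppr}.

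Setting $v_\varepsilon:=\sqrt{|\nabla u_\varepsilon|^2+\varepsilon^2}$, I would first verify the pointwise identity
\begin{equation*}
\partial_t u_\varepsilon \;=\; -v_\varepsilon\, H_\varepsilon.
\end{equation*}
Starting from $\nu_\varepsilon = -\nabla u_\varepsilon/v_\varepsilon$ and using $\nabla v_\varepsilon = \nabla^2 u_\varepsilon\, \nabla u_\varepsilon/v_\varepsilon$, a direct expansion of the divergence yields
\begin{equation*}
v_\varepsilon\, \nabla\cdot\nu_\varepsilon \;=\; -\Delta u_\varepsilon \;+\; \frac{\nabla u_\varepsilon \cdot \nabla^2 u_\varepsilon\, \nabla u_\varepsilon}{v_\varepsilon^2},
\end{equation*}
and the right-hand side is exactly $-\partial_t u_\varepsilon$ by~\eqref{epsAppr}.

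Taking absolute values gives the pointwise relation $|H_\varepsilon|^2 v_\varepsilon = |H_\varepsilon|\cdot |\partial_t u_\varepsilon|$. Since $K$ is compact inside the open set $\mathbb{R}^d\times(0,T)$, it is contained in $\mathbb{R}^d\times[0,T']$ for some $T'<T$, so integration over $K$ is automatically restricted to a bounded time interval. Combining with the $L^\infty$-bound from Lemma~\ref{epsSol} and the $L^\infty_t L^1_x$-bound from Theorem~\ref{strangeEst},
\begin{equation*}
\int_0^T\!\!\int_{\mathbb{R}^d}\chi_K\,|H_\varepsilon|^2\, v_\varepsilon\, \dd x\, \dd t
\;\leq\; \norm{\partial_t u_\varepsilon}_{L^\infty}\cdot T'\cdot \sup_{t\in[0,T']}\int_{\mathbb{R}^d}|H_\varepsilon(x,t)|\,\dd x,
\end{equation*}
and the right-hand side is bounded uniformly in $\varepsilon\in(0,1]$.

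The only step requiring care is establishing the identity $\partial_t u_\varepsilon = -v_\varepsilon H_\varepsilon$; this is a routine computation but it is the crucial observation, since it converts the quadratic integrand into a product in which one factor is controlled in $L^\infty_{x,t}$ and the other in $L^\infty_t L^1_x$. Once the identity is in hand, the compact support in time supplied by $K$ turns these two bounds directly into the desired estimate, with no integration-by-parts, compensated compactness, or regularity improvement required.
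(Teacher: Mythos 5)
Your argument is correct, and it takes a genuinely different route from the one the paper (following Evans--Spruck, \cite{ESIV}, Theorem 4.1) indicates. The paper's remark that the estimate ``follows from the bounds of $u_\varepsilon$ and its derivatives (Lemma~\ref{epsSol})'' points to the localized De~Giorgi energy dissipation argument: one differentiates the cut-off approximate perimeter $\int \eta\sqrt{|\nabla u_\varepsilon|^2+\varepsilon^2}\,\dd x$ in time, integrates by parts, and identifies $-\int\eta H_\varepsilon^2\sqrt{|\nabla u_\varepsilon|^2+\varepsilon^2}\,\dd x$ plus a boundary term involving $\nabla\eta$, which is controlled by the $L^\infty$-bounds in Lemma~\ref{epsSol}; integrating in time then yields the claim, using only these elementary bounds and no curvature estimate. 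You instead rewrite the integrand pointwise as $|H_\varepsilon|^2\sqrt{|\nabla u_\varepsilon|^2+\varepsilon^2}=|H_\varepsilon|\,|\partial_t u_\varepsilon|$ via the algebraic identity $\partial_t u_\varepsilon=-H_\varepsilon\sqrt{|\nabla u_\varepsilon|^2+\varepsilon^2}$ (which the paper records just before this lemma in the proof of Theorem~\ref{distViscosity}), and then invoke the deeper $L^\infty_t L^1_x$-bound on $H_\varepsilon$ from Theorem~\ref{strangeEst}. The trade-off is clear: your proof is shorter, needs no spatial cut-off, no integration by parts, and in fact delivers a slightly stronger conclusion (the estimate holds globally in space, i.e., $\chi_K$ is superfluous once $T<\infty$); on the other hand it relies on the nontrivial monotonicity of $t\mapsto\int|H_\varepsilon|\,\dd x$, whereas the energy argument gets by with only the elementary gradient bounds of Lemma~\ref{epsSol}.
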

This estimate is more intuitive than \eqref{strangeEquation} as for smooth manifolds evolving by mean curvature flow $\int |H|^2$ is the first variation of the perimeter functional and the factor $|\nabla u|$ is the coarea-factor.

Additionally, we will use a relabeling property which is stated in the following lemma and originates from~\cite[Proof of Theorem 6.3]{ESIV}.

\begin{lemma}[Relabeling property, cf.~{\cite[Proof of Theorem 6.3]{ESIV}}]
    Let $g$ be well-prepared initial data, cf.\ Definition~\ref{def:wellprepared} and $u$ the unique viscosity solution to~\eqref{Eq:lvleq} as well as $\Phi\in C^\infty(\mathbb{R})$ be a smooth test function with positive derivative $\Phi'>0$. Then $\Phi\circ u$ is the unique viscosity solution to the initial data $\Phi\circ g$. Moreover, the initial data $\Phi\circ g$ is well-prepared.
    \label{Relabeling}
\end{lemma}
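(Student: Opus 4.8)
The plan is to prove the two assertions of the lemma separately: first that $\Phi\circ u$ is the unique viscosity solution of the level set equation with datum $\Phi\circ g$, and then that $\Phi\circ g$ is again well-prepared. For the first assertion the essential point is that the nonlinear operator $F(\nabla v,\nabla^2 v):=\Delta v-\frac{\nabla v}{|\nabla v|}\cdot\nabla^2 v\frac{\nabla v}{|\nabla v|}$ on the right-hand side of the level set equation is \emph{geometric}, i.e.\ invariant up to the positive factor $\Phi'$ under relabeling by increasing functions. On the smooth level this follows from $\nabla(\Phi\circ v)=\Phi'(v)\nabla v$ and $\nabla^2(\Phi\circ v)=\Phi'(v)\nabla^2 v+\Phi''(v)\nabla v\otimes\nabla v$ together with the facts that the direction $\nabla(\Phi\circ v)/|\nabla(\Phi\circ v)|=\nabla v/|\nabla v|$ is unchanged and that the rank-one term $\Phi''(v)\nabla v\otimes\nabla v$ is annihilated upon contraction with $\mathrm{Id}-\nu\otimes\nu$; hence $F$ and $\partial_t$ both acquire precisely the factor $\Phi'(v)$. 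To transfer this to viscosity solutions I would run the standard touching-test-function argument: if $\varphi\in C^\infty$ touches $\Phi\circ u$ from below at $(x_0,t_0)$, then after normalising the additive constant the function $\varphi$ takes values in the open range of $\Phi$ near $(x_0,t_0)$, so $\psi:=\Phi^{-1}\circ\varphi$ is smooth there and touches $u$ from below at $(x_0,t_0)$; feeding $\psi$ into the super-solution inequality for $u$ and using the cancellation above (and dividing by $(\Phi^{-1})'(\varphi(x_0,t_0))>0$) produces exactly the super-solution inequality for $\Phi\circ u$ and $\varphi$, the critical case $\nabla\varphi(x_0,t_0)=0$ being reproduced verbatim with the \emph{same} vector $\xi$. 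The sub-solution property follows by applying this to the increasing function $s\mapsto -\Phi(-s)$ and the super-solution $-u$. Since $(\Phi\circ u)|_{t=0}=\Phi\circ g$ lies in $C^3$ and is constant outside $B_R$, uniqueness of viscosity solutions (cf.\ \cite[Theorem~3.2]{ESI}) upgrades this to: $\Phi\circ u$ is \emph{the} viscosity solution with datum $\Phi\circ g$.

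For the second assertion, $\Phi\circ g\in C^3$ and $\Phi\circ g$ is constant outside $B_R$ because $g$ is, so only the uniform approximate-mean-curvature bound requires work. Abbreviating $p:=\Phi'(g)$, which by compactness of $g(\mathbb{R}^d)$ satisfies $0<c\le p\le C$, and dividing numerator and denominator by $p$, a direct differentiation yields
\[
\nabla\cdot\frac{\nabla(\Phi\circ g)}{\sqrt{|\nabla(\Phi\circ g)|^2+\varepsilon^2}}=h_{\delta(\cdot)}+\frac{\varepsilon^2\,\Phi''(g)\,|\nabla g|^2}{\big(|\nabla(\Phi\circ g)|^2+\varepsilon^2\big)^{3/2}},\qquad \delta(x):=\frac{\varepsilon}{p(x)},
\]
where $h_\eta:=\nabla\cdot\big(\nabla g/\sqrt{|\nabla g|^2+\eta^2}\big)$ for a \emph{constant} parameter $\eta$, and $h_{\delta(\cdot)}$ denotes the pointwise expression obtained by substituting the $x$-dependent value $\delta(x)$. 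The second term is bounded on $B_R$ by $\|\Phi''\circ g\|_\infty\,\varepsilon/c^2$ and vanishes off $B_R$, so its $L^1$-norm is of order $\varepsilon$. For $h_{\delta(\cdot)}$ I would use, first, that $\delta(x)^2\in[\varepsilon^2/C^2,\varepsilon^2/c^2]$, so $|\nabla g|^2+\delta(x)^2$ and $|\nabla g|^2+\varepsilon^2$ are comparable up to a constant depending only on $c,C$, and second, the two-scale algebraic identity
\[
\tfrac12\,\Delta g\,\varepsilon^2=h_\varepsilon\,\big(|\nabla g|^2+\varepsilon^2\big)^{3/2}-h_{\varepsilon/\sqrt 2}\,\big(|\nabla g|^2+\tfrac{\varepsilon^2}{2}\big)^{3/2}
\]
in order to control the remaining $\Delta g\,\delta^2$ part of $h_{\delta(\cdot)}$. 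Together these yield a \emph{pointwise} bound $|h_{\delta(x)}(x)|\le C'\big(|h_\varepsilon(x)|+|h_{\varepsilon/\sqrt 2}(x)|\big)$ with $C'=C'(c,C)$; integrating and invoking the well-preparedness of $g$ at the two parameters $\varepsilon$ and $\varepsilon/\sqrt 2$ (both in $(0,1]$) then gives $\sup_{0<\varepsilon\le1}\int_{\mathbb{R}^d}\big|\nabla\cdot\big(\nabla(\Phi\circ g)/\sqrt{|\nabla(\Phi\circ g)|^2+\varepsilon^2}\big)\big|\,\dd x<\infty$, i.e.\ $\Phi\circ g$ is well-prepared.

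The chain-rule invariance and the touching-function bookkeeping are routine for geometric equations. The hard part will be the uniform estimate on the approximate mean curvature of $\Phi\circ g$: the naive pointwise bounds for $h_{\delta(\cdot)}$ degenerate near the critical set $\{\nabla g=0\}$ and, more subtly, on regions where $|\nabla g|$ is small but positive, so a crude comparison would only reduce the question to an \emph{uncontrolled} quantity such as $\int_{B_R}|\nabla g|^{-1}\,\dd x$; the role of the denominator scale-comparison together with the two-scale identity for $\Delta g\,\varepsilon^2$ is precisely to reduce it instead to the hypothesis on $g$ itself. This is the technical content hidden behind the reference to~\cite[Proof of Theorem~6.3]{ESIV}.
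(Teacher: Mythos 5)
Your proposal is correct. The paper itself gives no proof of this lemma --- it delegates both halves to \cite[Proof of Theorem 6.3]{ESIV} (the invariance of viscosity solutions under composition with increasing $\Phi$ goes back to \cite[Theorem 2.8]{ESI}) --- so you are reconstructing the cited argument rather than competing with one in the text. Your first half is exactly the standard mechanism: the additive normalization puts $\varphi(x_0,t_0)$ in the open range of $\Phi$, $\psi=\Phi^{-1}\circ\varphi$ touches $u$ from the correct side, the rank-one term $(\Phi^{-1})''(\varphi)\,\nabla\varphi\otimes\nabla\varphi$ is killed by the projection $\mathrm{Id}-\nu\otimes\nu$ (and vanishes outright at critical points, so the same $\xi$ works), and one divides by $(\Phi^{-1})'>0$; uniqueness then identifies $\Phi\circ u$ as the solution with datum $\Phi\circ g$. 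Your second half is where the real content lies, and it checks out: writing $h_\eta=(A+\eta^2\Delta g)(|\nabla g|^2+\eta^2)^{-3/2}$ with $A$ independent of $\eta$, your two-scale identity correctly isolates $\varepsilon^2\Delta g$ (and hence $A$) as combinations of $h_\varepsilon$ and $h_{\varepsilon/\sqrt2}$ weighted by the comparable denominators, yielding the pointwise bound $|h_{\delta(x)}(x)|\le C'(|h_\varepsilon|+|h_{\varepsilon/\sqrt2}|)$ and reducing everything to the hypothesis on $g$ at two admissible parameter values; the extra term from differentiating $\delta(x)=\varepsilon/\Phi'(g)$ is indeed $O(\varepsilon)$ in $L^1$ since it is supported in $B_R$. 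This is a clean, self-contained justification of the ``moreover'' clause that the paper leaves implicit.
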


\section{Generic level sets are BV solutions}

Next, we will begin with the proof of Theorem~\ref{Thm:MainThm}, i.e., that almost every level set of the viscosity solution is a distributional solution in the sense of BV solutions.

The following first theorem can be understood as an integrated version of the distributional equations for normal velocity and mean curvature. 
The main novelty is Equation~\eqref{1distV}, which encodes the normal velocity while the second part (Equation~\eqref{1distMC}) can be found as Theorem 5.1 in \cite{ESIV} where the name $H$ instead of $V$ is used.
Having proved these identities for the level set function, one can then proceed by disintegrating the level set parameter to get the corresponding identities for almost every level set. 
This is done below in Theorem~\ref{lvlVariants} using the layercake-formula and the coarea-formula (Lemma~\ref{coareaformula}).

\begin{thm}
\label{distViscosity}
Let $u\in C(\mathbb{R}^d\times[0,T))$ be the unique viscosity solution with some well-prepared initial data $g$.
Then there exists a $|\nabla u|\dd x \dd t$-measurable function $V\colon \mathbb{R}^d\times(0,T)\to\mathbb{R}$ such that
\begin{enumerate}[label=\roman*)]
    \item For any test function $\zeta\in C_c^1(\mathbb{R}^d\times [0,T))$ it holds
    \begin{align}
        \int\limits_0^T\int\limits_{\mathbb{R}^d} \partial_t\zeta u\dd x\dd t
        &=-\int\limits_0^T\int\limits_{\mathbb{R}^d} \zeta V|\nabla u|\dd x\dd t - \int\limits_{\mathbb{R}^d} g\,\zeta(\cdot,0)\dd x.\label{1distV}
    \end{align}
    \item For any test vector field $\xi\in C_c^1(\mathbb{R}^d\times (0,T);\mathbb{R}^d)$ we have
    \begin{align}
       \int\limits_0^T\int\limits_{\mathbb{R}^d}\left(\nabla \cdot \xi-\nu\cdot \nabla\xi\nu\right)|\nabla u|\dd x\dd t&=-\int\limits_0^T\int\limits_{\mathbb{R}^d} \xi\cdot\nu V|\nabla u|\dd x\dd t.\label{1distMC}
    \end{align}
\end{enumerate}
\end{thm}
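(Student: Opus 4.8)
\emph{Plan of proof.} The plan is to pass to the limit $\varepsilon\to 0$ in exact identities satisfied by the smooth approximations $u_\varepsilon$ from Lemma~\ref{epsSol}, taking $V$ to be the generalized normal velocity~\eqref{eq:defV}. Since $u$ is Lipschitz, $V$ is $|\nabla u|\dd x\dd t$-measurable (the derivatives of $u$ exist and are measurable almost everywhere), and because $\partial_t u=0$ almost everywhere on $\{\nabla u=0\}$ one has $\partial_t u=V|\nabla u|$ almost everywhere. The only input from the viscous equation is that the right-hand side of~\eqref{epsAppr} equals $-\sqrt{|\nabla u_\varepsilon|^2+\varepsilon^2}\,\nabla\cdot\nu_\varepsilon$, so that the smooth solutions satisfy $\partial_t u_\varepsilon=-\sqrt{|\nabla u_\varepsilon|^2+\varepsilon^2}\,H_\varepsilon$, equivalently $\partial_t u_\varepsilon=V_\varepsilon\sqrt{|\nabla u_\varepsilon|^2+\varepsilon^2}$ with $V_\varepsilon:=-H_\varepsilon$; recall also $\sqrt{|\nabla u_\varepsilon|^2+\varepsilon^2}\,\nu_\varepsilon=-\nabla u_\varepsilon$.

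For~\eqref{1distV} I would integrate by parts in time: for $\zeta\in C_c^1(\mathbb{R}^d\times[0,T))$,
\[
\int_0^T\int_{\mathbb{R}^d}\partial_t\zeta\,u_\varepsilon\dd x\dd t=-\int_0^T\int_{\mathbb{R}^d}\zeta\,\partial_t u_\varepsilon\dd x\dd t-\int_{\mathbb{R}^d}\zeta(\cdot,0)\,g\dd x.
\]
Letting $\varepsilon\to 0$, the left-hand side converges by the locally uniform convergence $u_\varepsilon\to u$ of Theorem~\ref{conv}, the boundary term is independent of $\varepsilon$, and on the right $\partial_t u_\varepsilon\overset{*}{\rightharpoonup}\partial_t u$ in $L^\infty$ gives $\int_0^T\int_{\mathbb{R}^d}\zeta\,\partial_t u_\varepsilon\to\int_0^T\int_{\mathbb{R}^d}\zeta\,\partial_t u=\int_0^T\int_{\mathbb{R}^d}\zeta\,V|\nabla u|$; this is~\eqref{1distV}.

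For~\eqref{1distMC} I would first verify, for smooth $u_\varepsilon$ and any $\xi\in C_c^1(\mathbb{R}^d\times(0,T);\mathbb{R}^d)$, the \emph{exact} identity
\[
\int_0^T\int_{\mathbb{R}^d}(\nabla\cdot\xi-\nu_\varepsilon\cdot\nabla\xi\nu_\varepsilon)\sqrt{|\nabla u_\varepsilon|^2+\varepsilon^2}\dd x\dd t=-\int_0^T\int_{\mathbb{R}^d}\xi\cdot\nu_\varepsilon\,\partial_t u_\varepsilon\dd x\dd t,
\]
which follows by spatial integration by parts after inserting $\partial_t u_\varepsilon=-\sqrt{|\nabla u_\varepsilon|^2+\varepsilon^2}\,\nabla\cdot\nu_\varepsilon$ and $\sqrt{|\nabla u_\varepsilon|^2+\varepsilon^2}\,\nu_\varepsilon=-\nabla u_\varepsilon$ (cf.~\cite[Theorem~5.1]{ESIV}), and then pass to the limit termwise. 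The pure divergence term converges directly because $0\le\sqrt{|\nabla u_\varepsilon|^2+\varepsilon^2}-|\nabla u_\varepsilon|\le\varepsilon$ and $|\nabla u_\varepsilon|\overset{*}{\rightharpoonup}|\nabla u|$. In the remaining quadratic and product terms I would split the domain into $\{\nabla u\neq 0\}$ and $\{\nabla u=0\}$: on $\{\nabla u\neq 0\}$ the strong convergence $\nu_\varepsilon\to\nu$ in $L^2_{\mathrm{loc}}$ combines with $\nabla u_\varepsilon\overset{*}{\rightharpoonup}\nabla u=-|\nabla u|\nu$ and $\partial_t u_\varepsilon\overset{*}{\rightharpoonup}\partial_t u$ to identify the limits; on $\{\nabla u=0\}$, where $|\nabla u|=0$, the integrands are dominated by a constant times $|\nabla u_\varepsilon|$, respectively $|\nabla u_\varepsilon|\,|H_\varepsilon|$, so the corresponding contributions vanish since $\int_{\{\nabla u=0\}\cap\supp\xi}|\nabla u_\varepsilon|\to\int_{\{\nabla u=0\}\cap\supp\xi}|\nabla u|=0$ and, by Cauchy--Schwarz, $\int_{\{\nabla u=0\}\cap\supp\xi}|\nabla u_\varepsilon|\,|H_\varepsilon|\le\big(\int_{\{\nabla u=0\}\cap\supp\xi}|\nabla u_\varepsilon|\big)^{1/2}\big(\int_{\supp\xi}|H_\varepsilon|^2\sqrt{|\nabla u_\varepsilon|^2+\varepsilon^2}\big)^{1/2}\to0$, the second factor being bounded by Lemma~\ref{HsqrtEst}. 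In the limit the identity becomes $\int_0^T\int_{\mathbb{R}^d}(\nabla\cdot\xi-\nu\cdot\nabla\xi\nu)|\nabla u|\dd x\dd t=-\int_0^T\int_{\mathbb{R}^d}\xi\cdot\nu\,V|\nabla u|\dd x\dd t$, which is~\eqref{1distMC}.

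The main obstacle is the passage to the limit in the nonlinear terms of~\eqref{1distMC} — the quadratic form $\nu_\varepsilon\cdot\nabla\xi\nu_\varepsilon\sqrt{|\nabla u_\varepsilon|^2+\varepsilon^2}$ and the product $\xi\cdot\nu_\varepsilon\,\partial_t u_\varepsilon$ — in which only one factor converges strongly, together with the control of the degenerate set $\{\nabla u=0\}$. This is precisely where the unexpected estimate of Theorem~\ref{strangeEst} is decisive: through the compensated-compactness argument behind Theorem~\ref{conv} it provides the strong convergence $\nu_\varepsilon\to\nu$ on $\{\nabla u\neq 0\}$, and together with Lemma~\ref{HsqrtEst} it tames the contribution from the region where the coarea factor $|\nabla u|$ degenerates.
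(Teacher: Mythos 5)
Your proposal follows essentially the same route as the paper: derive the exact identities at the level of the viscous approximation $u_\varepsilon$, then pass to the limit using the convergences of Theorem~\ref{conv} on $\{\nabla u\neq 0\}$ and a Cauchy--Schwarz argument with Lemma~\ref{HsqrtEst} to kill the contribution of the degenerate set $\{\nabla u=0\}$. The one place where you assert rather than prove is the claim that $\partial_t u=0$ almost everywhere on $\{\nabla u=0\}$, which is precisely what makes the identification $\partial_t u=V|\nabla u|$, and hence~\eqref{1distV}, work; this is not obvious and is where the paper spends most of its effort for part~(i). It follows, however, from exactly the argument you already use for~\eqref{1distMC}: write $\partial_t u_\varepsilon=-H_\varepsilon\sqrt{|\nabla u_\varepsilon|^2+\varepsilon^2}$, apply Cauchy--Schwarz on a compact subset $A\subset\{\nabla u=0\}$ to bound $\int_A|\partial_t u_\varepsilon|$ by $\bigl(\int_A H_\varepsilon^2\sqrt{|\nabla u_\varepsilon|^2+\varepsilon^2}\bigr)^{1/2}\bigl(\int_A\sqrt{|\nabla u_\varepsilon|^2+\varepsilon^2}\bigr)^{1/2}$, and use Lemma~\ref{HsqrtEst} for the first factor and $\sqrt{|\nabla u_\varepsilon|^2+\varepsilon^2}\overset{*}{\rightharpoonup}|\nabla u|=0$ on $A$ for the second; so this is a presentational omission rather than a wrong step. (A cosmetic point: on $\{\nabla u=0\}$ the integrand $|\xi\cdot\nu_\varepsilon\,\partial_t u_\varepsilon|$ is dominated by $C|H_\varepsilon|\sqrt{|\nabla u_\varepsilon|^2+\varepsilon^2}$ rather than $C|H_\varepsilon|\,|\nabla u_\varepsilon|$; the extra $\varepsilon|H_\varepsilon|$ is harmless by Theorem~\ref{strangeEst}, or one avoids it entirely by keeping the square root inside the Cauchy--Schwarz as above.)
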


\begin{proof}
The idea is to prove an approximate version for $u_\varepsilon$ using integration by parts and the definition of the vanishing viscosity equation. Afterwards, one uses Lemma~\ref{HsqrtEst} and the convergences in Theorem~\ref{conv} to get the equation for $u$. Here, the convergence up to the set $\{\nabla u=0\}$ suffices since their contribution will vanish.

\emph{Argument for~\eqref{1distV}.} 
Fix a test function $\zeta\in C_c^1(\mathbb{R}^d\times [0,T))$. 
We first claim that $\partial_t u = V |\nabla u|$ almost everywhere on $\mathbb{R}^d\times(0,T)$. 
Then, we may conclude via
\begin{align*}
    \int\limits_0^T\int\limits_{\mathbb{R}^d}\partial_t\zeta u\dd x\dd t&=-\int\limits_0^T \int\limits_{\mathbb{R}^d}\zeta \partial_t u\dd x\dd t-\int\limits_{\mathbb{R}^d}g\zeta(\cdot,0)\dd x\\
    &=-\int\limits_0^T \int\limits_{\mathbb{R}^d}\zeta V|\nabla u|\dd x\dd t-\int\limits_{\mathbb{R}^d}g\,\zeta(\cdot,0)\dd x,
\end{align*}
which is precisely~\eqref{1distV}.

Now, we argue that indeed $\partial_t u = V |\nabla u|$ almost everywhere. 
First, observe that by definition of the normal velocity $V$, cf.~\eqref{eq:defV}, the identity holds on the set $\{\nabla u \neq 0\}$.
Thus, we only need to argue that $\partial_tu=0$ almost everywhere on $\{\nabla u=0\}$. 
This in turn follows from the next computation, cf.~\cite[Lemma 4.2]{ESIV}. 
Note that we can express the evolution equation, Equation~\eqref{epsAppr}, as $\partial_tu_\varepsilon=-H_\varepsilon\sqrt{|\nabla u_\varepsilon|^2+\varepsilon^2}$. Then, for every compact subset $A$ of the critical set $\{\nabla u=0\}$:
\begin{align*}
    \left|\int\limits_A \partial_t u\dd x\dd t\right|&\leq \lim\limits_{\varepsilon\searrow 0}\int\limits_A |\partial_tu_\varepsilon|\dd x\dd t\\
    &=\lim\limits_{\varepsilon\searrow 0}\int\limits_A |H_\varepsilon|\sqrt{|\nabla u_\varepsilon|^2+\varepsilon^2}\dd x\dd t\\
    &\leq \liminf\limits_{\varepsilon\searrow 0}\left(\int\limits_A H_\varepsilon^2\sqrt{|\nabla u_\varepsilon|^2+\varepsilon^2}\dd x\dd t\right)^\frac{1}{2}\\
    &\quad\quad\quad\quad\quad  \times\left(\int\limits_A \sqrt{|\nabla u_\varepsilon|^2+\varepsilon^2}\dd x\dd t\right)^\frac{1}{2}\\
    &\leq C\left(\int\limits_A|\nabla u|\dd x\dd t\right)^\frac{1}{2}=0,
\end{align*}
where in the last step we have used Lemma~\ref{HsqrtEst} to bound the first term and Theorem~\ref{conv} to pass to the limit in the second term. 
Now, by inner regularity this proves $\partial_t u=0$ almost everywhere on $\{\nabla u=0\}$.

\emph{Argument for~\eqref{1distMC}.} As stated above, the following argument can be found in \cite[Theorem 5.1]{ESIV} but for the convenience of the reader, we highlight the main idea. 
Testing Equation~\eqref{epsAppr} with $\xi \cdot \nu_\varepsilon$ for a given test vector field $\xi\in C_c^1(\mathbb{R}^d\times(0,T); \mathbb{R}^d)$  and integrating by parts yields the following $\varepsilon$-variant of~\eqref{1distMC}:
\begin{align*}
   \int\limits_0^T\int\limits_{\mathbb{R}^d} \left(\nabla\cdot\xi-\nu_\varepsilon\cdot \nabla\xi \nu_\varepsilon\right)\sqrt{|\nabla u_\varepsilon|^2+\varepsilon^2}\dd x\dd t= -\int\limits_0^T\int\limits_{\mathbb{R}^d}&\xi\cdot \nu_\varepsilon \partial_t u_\varepsilon \dd x\dd t.
\end{align*}
For a detailed derivation, cf.\ Section~\ref{homogNeumann}.
Now, Theorem~\ref{conv} and Lemma~\ref{HsqrtEst} allow us to pass to the limit $\varepsilon\searrow0$ on both sides of this identity.
\end{proof}

We additionally need the optimal energy dissipation. This will be a consequence of the following theorem for the level set function $u$, which follows from \cite[Theorem 5.2]{ESIV} where again $V$ is called $H$.

\begin{thm}
\label{DeGiorgiViscosity}
For the viscosity solution $u$ and any two time instances $0\leq t_1<t_2\leq T$, we have a version of the optimal energy dissipation:
\begin{align}
    \left.\int\limits_{\mathbb{R}^d}|\nabla u|\dd x\right|_{t_2}+\int\limits_{t_1}^{t_2}\int\limits_{\mathbb{R}^d}V^2|\nabla u|\dd x\dd t\leq \left.\int\limits_{\mathbb{R}^d}|\nabla u|\dd x\right|_{t_1}.\label{phiDeriv}
\end{align}
\end{thm}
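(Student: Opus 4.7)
The plan is to derive an exact energy identity at the $\varepsilon$-level of the vanishing-viscosity approximation and then pass to the limit $\varepsilon\searrow 0$, controlling each side by lower/upper semicontinuity and the well-preparedness of the initial data.

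\emph{Step 1 ($\varepsilon$-level identity).} I would first rewrite~\eqref{epsAppr} in the equivalent geometric form $\partial_t u_\varepsilon=-\sqrt{|\nabla u_\varepsilon|^2+\varepsilon^2}\,H_\varepsilon$, set $E_\varepsilon(t):=\int_{\mathbb{R}^d}\sqrt{|\nabla u_\varepsilon(\cdot,t)|^2+\varepsilon^2}\,dx$, and compute
\begin{align*}
\frac{d}{dt}E_\varepsilon
= \int_{\mathbb{R}^d}\frac{\nabla u_\varepsilon\cdot\nabla\partial_t u_\varepsilon}{\sqrt{|\nabla u_\varepsilon|^2+\varepsilon^2}}\,dx
= \int_{\mathbb{R}^d}H_\varepsilon\,\partial_t u_\varepsilon\,dx
= -\int_{\mathbb{R}^d}H_\varepsilon^2\sqrt{|\nabla u_\varepsilon|^2+\varepsilon^2}\,dx,
\end{align*}
the integration by parts being justified by the spatial decay of $\nabla u_\varepsilon$ coming from Lemma~\ref{epsSol} and well-preparedness. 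Integrating over $(t_1,t_2)$ gives the exact $\varepsilon$-version of~\eqref{phiDeriv}.

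\emph{Step 2 (limit passage).} For the perimeter term at $t_2$, the weak-$*$ convergence $\nabla u_\varepsilon\overset{*}{\rightharpoonup}\nabla u$ from Theorem~\ref{conv} together with lower semicontinuity of the total variation gives $\int|\nabla u(\cdot,t_2)|\,dx\leq\liminf_\varepsilon E_\varepsilon(t_2)$. For the dissipation I would use a Reshetnyak-type duality argument: for any $\zeta\in C_c^1(\mathbb{R}^d\times(t_1,t_2))$, expanding $(V_\varepsilon+\zeta)^2\geq 0$ and using $V_\varepsilon\sqrt{|\nabla u_\varepsilon|^2+\varepsilon^2}=\partial_t u_\varepsilon$ yields
\begin{align*}
\int_{t_1}^{t_2}\!\!\int_{\mathbb{R}^d} V_\varepsilon^2\sqrt{|\nabla u_\varepsilon|^2+\varepsilon^2}\,dx\,dt
\geq -2\int_{t_1}^{t_2}\!\!\int_{\mathbb{R}^d}\zeta\,\partial_t u_\varepsilon\,dx\,dt
-\int_{t_1}^{t_2}\!\!\int_{\mathbb{R}^d}\zeta^2\sqrt{|\nabla u_\varepsilon|^2+\varepsilon^2}\,dx\,dt.
\end{align*}
The right-hand side converges to $-2\int\zeta\,\partial_t u\,dx\,dt-\int\zeta^2|\nabla u|\,dx\,dt$ by Theorem~\ref{conv} and the trivial bound $|\sqrt{a^2+\varepsilon^2}-|a||\leq\varepsilon$. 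Using $\partial_t u=V|\nabla u|$ almost everywhere (Theorem~\ref{distViscosity}) and optimizing over $\zeta$ approximating $-V$ in $L^2(|\nabla u|\,dx\,dt)$ recovers $\int V^2|\nabla u|\,dx\,dt$ on the right.

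\emph{Step 3 (right-hand side and conclusion).} At $t_1=0$, the $C^3$-regularity and spatial constancy of $g$ outside $B_R$ from Definition~\ref{def:wellprepared} give $E_\varepsilon(0)\to\int|\nabla g|\,dx$ by dominated convergence, which combines with Step~2 to establish~\eqref{phiDeriv} for $t_1=0$. For general $t_1>0$, the monotonicity $\frac{d}{dt}E_\varepsilon\leq 0$ from Step~1 plus Helly's selection theorem produce, along a subsequence, a nonincreasing pointwise limit $E(t)\geq\int|\nabla u(\cdot,t)|\,dx$ with $E(0)=\int|\nabla g|\,dx$; the inequality $E(t_1)\leq\int|\nabla u(\cdot,t_1)|\,dx$ then follows from the $t_1=0$ case of~\eqref{phiDeriv} combined with the lower semicontinuity $E(t_1)\geq\int|\nabla u(\cdot,t_1)|\,dx$, closing the argument. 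The main technical obstacle is the lower semicontinuity of the quadratic dissipation under the merely weak-$*$ convergences from Theorem~\ref{conv}; the duality trick with the auxiliary field $\zeta$ is the clean workaround but demands careful approximation since $V$ is only defined on $\{\nabla u\neq 0\}$ and a priori lives only in $L^2(|\nabla u|\,dx\,dt)$.
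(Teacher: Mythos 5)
The paper itself only cites \cite[Theorem 5.2]{ESIV} for this statement, so you are reconstructing an argument from scratch, and the overall approach you take---derive an exact energy identity at the $\varepsilon$-level and pass to the limit using lower semicontinuity plus a duality trick for the quadratic dissipation---is the right one and does match the Evans--Spruck philosophy. However, there are two genuine gaps.

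First, the quantity $E_\varepsilon(t):=\int_{\mathbb{R}^d}\sqrt{|\nabla u_\varepsilon|^2+\varepsilon^2}\,\mathrm{d}x$ is identically $+\infty$: the integrand is bounded below by the constant $\varepsilon>0$ on all of $\mathbb{R}^d$, so the integral diverges regardless of well-preparedness. You need to renormalize, e.g.\ work with $\tilde E_\varepsilon(t):=\int_{\mathbb{R}^d}\bigl(\sqrt{|\nabla u_\varepsilon|^2+\varepsilon^2}-\varepsilon\bigr)\,\mathrm{d}x$, which is finite because the integrand is dominated by $|\nabla u_\varepsilon|$. The time derivative computation in Step~1 is unaffected by subtracting the constant, so this is a fixable technical point, but as written the identity involves $\infty=\infty$.

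Second, and more seriously, your Step~3 for $t_1>0$ does not close. You have, from Helly and lower semicontinuity, $E(t_1)\geq e(t_1):=\int|\nabla u(\cdot,t_1)|\,\mathrm{d}x$, and from the $t_1=0$ case $e(t_1)+D(0,t_1)\leq e(0)=E(0)$, and from the $\varepsilon$-identity plus lower semicontinuity of the dissipation $E(t_1)+D(0,t_1)\leq E(0)$. These three inequalities do not imply $E(t_1)\leq e(t_1)$: for instance $E(0)=10$, $D(0,t_1)=2$, $e(t_1)=5$, $E(t_1)=7$ satisfies all three but violates the conclusion. The implication you claim ("follows from the $t_1=0$ case combined with lower semicontinuity") is therefore a non sequitur. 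What is actually needed to rule out an energy drop at $t_1$ is the energy \emph{convergence} $\tilde E_\varepsilon(t)\to e(t)$ for a.e.\ $t$, which is where the compensated-compactness consequence $|\nabla u_\varepsilon|\overset{*}{\rightharpoonup}|\nabla u|$ from Theorem~\ref{conv} must be used: testing this weak-$*$ convergence against $\psi(t)\phi(x)$ with $\psi\in L^1(0,T)$, $\phi\nearrow 1$, together with the uniform bound and Helly's pointwise limit, identifies $E=e$ a.e.\ in time. Without this input the argument for general $t_1$ is incomplete, and the statement would at best hold for a.e.\ $t_1$ rather than all $t_1$.
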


From the previous two theorems, one can extract the level set case for almost every level set by use of the coarea-formula (Lemma~\ref{coareaformula}) and the layercake-formula. To this end, it is necessary to define a notion for the level sets and super level sets.

\begin{defi}[$\Omega_s, \Sigma_s$] For a viscosity solution $u$ to \eqref{Eq:lvleq} and any $s\in \mathbb{R}$ and $t\in[0,\infty)$, one defines the super level sets $\Omega_s(t):=\{x:u(x,t)>s\}$ and the level sets $\Sigma_s(t):=\{x:u(x,t)=s\}$.
\label{OmegaSigma}
\end{defi}

The following theorem states the disintegrated version of Theorem~\ref{distViscosity}.

\begin{thm}
Let $u$ be a viscosity solution to the level set equation~\eqref{Eq:lvleq} and let $\Omega_s, \Sigma_s$ be given as in Definition~\ref{OmegaSigma}. Then, for almost every value $s\in \mathbb{R}$, we have
\begin{enumerate}[label=\roman*)]
    \item  The normal velocity of $\Sigma_s$ is given by $V$ in the sense that for all test functions $\zeta\in C_c^1(\mathbb{R}^d\times (0,T))$:
    \begin{align}
        \int\limits_0^T \int\limits_{\Omega_s(t)}\partial_t\zeta\dd x\dd t&=-\int\limits_0^T\int\limits_{\Sigma_s(t)}\zeta V\dd\mathcal{H}^{d-1}(x)\dd t-\int_{\Omega_s(0)} \zeta(\cdot,0)\dd x,\label{lvlV}
    \end{align}
    \item The equation $V=-H$ is satisfied for $\Sigma_s$ in the sense that for all test vector fields $\xi\in C_c^1(\mathbb{R}^d\times (0,T);\mathbb{R}^d)$:
    \begin{align}
        \int\limits_0^T\int\limits_{\Sigma_s(t)}(\nabla\cdot\xi-\nu\cdot\nabla\xi\nu)\dd\mathcal{H}^{d-1}(x)\dd t&=-\int\limits_0^T\int\limits_{\Sigma_s(t)}\xi\cdot \nu V\dd\mathcal{H}^{d-1}(x)\dd t.\label{lvlMC}
    \end{align}
\end{enumerate}
\label{lvlVariants}
\end{thm}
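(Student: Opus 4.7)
The plan is to apply the integrated identities from Theorem~\ref{distViscosity} not to $u$ itself but to a smooth relabeling $\Psi\circ u$, where $\Psi\in C^\infty(\mathbb{R})$ has $\Psi'>0$, and then to disintegrate in the level-set parameter $s$ using the coarea and layer-cake formulas. The key reason this works is that a smooth relabeling preserves the underlying geometric objects: by Lemma~\ref{Relabeling}, $\Psi\circ u$ is again a viscosity solution with well-prepared initial datum $\Psi\circ g$, and a direct computation from~\eqref{eq:defV} gives
\[
    |\nabla(\Psi\circ u)|=\Psi'(u)|\nabla u|,\qquad \nu_{\Psi\circ u}=\nu_u,\qquad V_{\Psi\circ u}=V_u\ \text{on }\{\nabla u\neq 0\}.
\]
Consequently, applying~\eqref{1distV} and~\eqref{1distMC} to $\Psi\circ u$ introduces a single factor of $\Psi'(u)$ in each integrand, which the coarea formula then converts into a weight $\Psi'(s)$ after disintegration.

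For part~i), let $M$ be the $L^\infty$-bound on $u$ coming from Theorem~\ref{conv}. I would insert the layer-cake identity $\Psi(u(x,t))=\Psi(-M)+\int_{-M}^{M}\Psi'(s)\chi_{\Omega_s(t)}(x)\dd s$ into the left-hand side of~\eqref{1distV} and apply coarea to the right-hand side. The boundary contributions from $\Psi(-M)$ on the two sides cancel via $\int_0^T\!\int_{\mathbb{R}^d}\partial_t\zeta\dd x\dd t=-\int_{\mathbb{R}^d}\zeta(\cdot,0)\dd x$, leaving
\[
    \int_{-M}^{M}\Psi'(s)\,F_\zeta(s)\dd s=0,
\]
where $F_\zeta(s)$ is the algebraic combination of the three terms in~\eqref{lvlV} whose vanishing is precisely the desired identity. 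For part~ii), the same procedure applied to~\eqref{1distMC} with weight $\Psi'(u)$ yields $\int_{-M}^{M}\Psi'(s)\,G_\xi(s)\dd s=0$ for the corresponding discrepancy $G_\xi(s)$ of~\eqref{lvlMC}.

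To pass from these weighted integral identities to pointwise-in-$s$ statements, I would use that the family $\{\Psi':\Psi\in C^\infty(\mathbb{R}),\ \Psi'>0\}$ spans $C_c^\infty(\mathbb{R})$ by differences: given any $\psi_0\in C_c^\infty(\mathbb{R})$ and $K>\sup|\psi_0|$, both $\psi_0+K$ and the constant $K$ extend to strictly positive smooth functions on $\mathbb{R}$, so subtracting the two resulting instances of the above identity gives $\int F_\zeta \psi_0\dd s=0$. Hence $F_\zeta=0$ almost everywhere; the argument for $G_\xi$ is identical. Choosing countable dense families $\{\zeta_n\}\subset C_c^1(\mathbb{R}^d\times[0,T))$ and $\{\xi_n\}\subset C_c^1(\mathbb{R}^d\times(0,T);\mathbb{R}^d)$, taking the union of the corresponding null sets, and using continuity of both sides of each identity in the test function then produces a single set of full measure on which both~\eqref{lvlV} and~\eqref{lvlMC} hold for all admissible $\zeta$ and $\xi$.

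The step I expect to be the main obstacle is the slicing bookkeeping that makes the formal coarea/layer-cake manipulation rigorous: one has to verify that for a.e.\ $s$ the topological level set $\Sigma_s(t)$ agrees up to $\mathcal{H}^{d-1}$-null sets with the reduced boundary $\partial^*\Omega_s(t)$, that the critical set $\{\nabla u=0\}$ contributes negligibly to $\Sigma_s(t)$ so that $V$ is $\mathcal{H}^{d-1}$-a.e.\ defined on each slice, and that the $|\nabla u|\dd x\dd t$-measurability of $V$ from Theorem~\ref{distViscosity} descends to $\mathcal{H}^{d-1}\dd t$-measurability on almost every $\Sigma_s$. These facts all follow from the BV-coarea and slicing machinery for the bounded Lipschitz function $u$, but they are what licenses the interchange of integrals carried out in the disintegration.
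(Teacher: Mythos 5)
Your proposal follows the same route as the paper: apply the integrated identities of Theorem~\ref{distViscosity} to the relabeled solution $\Phi\circ u$ via Lemma~\ref{Relabeling}, disintegrate using the coarea and layer-cake formulas, and then invoke the fundamental lemma of the calculus of variations to pass to almost every level-set value $s$. The only (minor, legitimate) difference is that you spell out the differencing trick (strictly positive $\Phi'=\psi_0+K$ versus constant $K$) to reduce to compactly supported test weights, where the paper is a bit more informal; the substance is identical.
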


Equation~\eqref{lvlMC} is similar to the one found in \cite{ESIV} now on a level set basis while \eqref{lvlV} is new.

\begin{proof}
The idea is to use Theorem~\ref{distViscosity} combined with the relabeling property. By the coarea-formula and layercake-formula, one can split the integrals into the (super-)level sets and then use the relabeling to separate these and get the equality for almost every level set separately.

\emph{Argument for~\eqref{lvlV}}: Fix $\Phi\in C^\infty(\mathbb{R})$ with $\Phi'>0$ and observe that $\Phi\circ u$ is the unique viscosity solution to the well-prepared initial data $\Phi\circ g$ by Lemma~\ref{Relabeling}. 
Now, we can use Equation~\eqref{1distV} with $V_\Phi=\frac{\partial_t u}{|\nabla u|}=V$ in $\{|\nabla u|\neq 0\}$. 
By the coarea-formula, the right-hand side is equal to 
\begin{align*}
-\int\limits_\mathbb{R}\Phi'(s)\int\limits_0^T\int\limits_{\Sigma_s(t)} \zeta V\dd\mathcal{H}^{d-1}(x)\dd t\dd s- \int\limits_{\mathbb{R}^d} g\zeta(\cdot,0)\dd x.
\end{align*}
For the left-hand side and the initial data term, one can use the layercake-formula with a transformation, where $K\in\mathbb{R}, K<u_{\text{min}}$ and $u_{\text{min}}\coloneqq \inf\limits_{(x,t)}u(x,t)$ is the infimum of $u$ in space and time (bounded by~Lemma~\ref{epsSol}):
\begin{align*}
    \Phi\circ u
    =\int\limits_{\Phi(K)}^{\Phi\circ u}1\dd \tau+\Phi(K) =\int\limits_{K}^{\infty}\chi_{\{u>s\}}\Phi'(s)\dd s+\Phi(K).
\end{align*}
This yields for the left-hand side of \eqref{1distV}
\begin{align*}
    \int\limits_0^T\int\limits_{\mathbb{R}^d} \partial_t\zeta \Phi\circ u\dd x\dd t
    &=\int\limits_0^T\int\limits_{\mathbb{R}^d} \partial_t\zeta \int\limits_{K}^{\infty}\chi_{\{u>s\}}\Phi'(s)\dd s\dd x\dd t
    +\int\limits_0^T\int\limits_{\mathbb{R}^d} \partial_t\zeta \Phi(K)\dd x\dd t\\
    &=\int\limits_{K}^{\infty}\Phi'(s)\int\limits_0^T\int\limits_{\Omega_s(t)} \partial_t\zeta\dd x\dd t \dd s+0.
\end{align*}
Now, since $\Phi$ was arbitrary under the above assumptions, one can use $\Phi(s)=\int\limits_{-\infty}^s\varphi(s')\dd s'$ for $\varphi\in C_b^\infty(\mathbb{R}; \mathbb{R}_{> 0})$ a bump-function. Then, by the fundamental lemma of the calculus of variations, one has for almost every level set value $s$ (for $s<u_{\text{min}}$ both sides equal $0$) and for all test functions $\zeta\in C_c^1(\mathbb{R}^d\times (0,T))$
\begin{align*}
    \int\limits_0^T \int\limits_{\Omega_s(t)}\partial_t\zeta\dd x\dd t=-\int\limits_0^T\int\limits_{\Sigma_s(t)}\zeta V\dd\mathcal{H}^{d-1}(x)\dd t-\int\limits_{\Omega_s(0)} \zeta(\cdot,0)\dd x.
\end{align*}

\emph{Argument for \eqref{lvlMC}}: Fix again $\Phi\in C^\infty(\mathbb{R})$ with $\Phi'> 0$ and well-prepared initial data $g$. By Lemma~\ref{Relabeling}, $\Phi\circ g$ is also well-prepared.

Then, by \eqref{1distMC} using $\nu_\Phi=-\frac{\nabla (\Phi\circ u)}{|\nabla (\Phi\circ u)|}=\nu$ in $\{|\nabla u|\neq 0\}$ and $|\nabla (\Phi\circ u)|=\Phi'\circ u |\nabla u|$, we obtain:
\begin{align*}
\int\limits_0^T\int\limits_{\mathbb{R}^d}&\Phi'\circ u\left(\nabla \cdot \xi-\nu\cdot \nabla\xi\nu\right)|\nabla u|\dd x\dd t&=-\int\limits_0^T\int\limits_{\mathbb{R}^d}\Phi'\circ u \, \xi\cdot\nu V|\nabla u|\dd x\dd t
\end{align*}
and by the coarea-formula, this can be written as
\begin{align*}
    \int\limits_\mathbb{R} \Phi'(s)&\int\limits_0^T\int\limits_{\Sigma_s(t)}(\nabla \cdot \xi-\nu\cdot \nabla\xi\nu)\dd \mathcal{H}^{d-1}(x)\dd t\dd s\\
    &=-\int\limits_\mathbb{R}\Phi'(s)\int\limits_0^T\int\limits_{\Sigma_s(t)} \xi\cdot\nu V\dd \mathcal{H}^{d-1}(x)\dd t\dd s.
\end{align*}
As before, for almost every $s$ this yields the desired equality
$$\int\limits_0^T\int\limits_{\Sigma_s(t)} (\nabla \cdot \xi-\nu\cdot \nabla\xi\nu)\dd \mathcal{H}^{d-1}(x)\dd t=-\int\limits_0^T\int\limits_{\Sigma_s(t)} \xi\cdot\nu V\dd \mathcal{H}^{d-1}(x)\dd t,$$
which concludes the proof.
\end{proof}

Similarly, one can prove the level set version of Theorem~\ref{DeGiorgiViscosity}. This version will become the sharp energy dissipation for the BV solution.

\begin{thm}
Fix two time instances $0\leq t_1<t_2\leq T$. Then, for almost every level set value $s\in\mathbb{R}$, the sharp energy dissipation holds:
\begin{align}
    \mathcal{H}^{d-1}(\Sigma_s(t_2))+\int\limits_{t_1}^{t_2}\int\limits_{\Sigma_s(t)}V^2\dd \mathcal{H}^{d-1}(x)\dd t\leq \mathcal{H}^{d-1}(\Sigma_s(t_1)).
    \label{lvlphiDeriv}
\end{align}
\label{lvlphiDerivthm}
\end{thm}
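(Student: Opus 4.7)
The plan is to mimic the disintegration argument of the previous theorem, but starting from the $L^1$-energy dissipation of Theorem~\ref{DeGiorgiViscosity} instead of the distributional identities. More precisely, I would first lift the inequality to a relabeled level set function, then apply the coarea-formula to all three terms, and finally peel off the $s$-parameter via a variational-lemma argument for inequalities.

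Concretely, fix $\Phi\in C^\infty(\mathbb{R})$ with $\Phi'>0$. By Lemma~\ref{Relabeling}, the function $\Phi\circ u$ is the unique viscosity solution with well-prepared initial data $\Phi\circ g$. Applying Theorem~\ref{DeGiorgiViscosity} to $\Phi\circ u$ and using $|\nabla(\Phi\circ u)|=\Phi'\circ u\,|\nabla u|$ together with $V_\Phi=V$ on $\{\nabla u\neq 0\}$ (and the fact that the set $\{\nabla u=0\}$ does not contribute to any of the integrals, by the argument for $\partial_t u=0$ there from Theorem~\ref{distViscosity}), the dissipation inequality for $\Phi\circ u$ reads
\begin{align*}
\int_{\mathbb{R}^d}\Phi'\circ u\,|\nabla u|\dd x\Big|_{t_2}
+\int_{t_1}^{t_2}\int_{\mathbb{R}^d}V^2\,\Phi'\circ u\,|\nabla u|\dd x\dd t
\leq \int_{\mathbb{R}^d}\Phi'\circ u\,|\nabla u|\dd x\Big|_{t_1}.
\end{align*}

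Next, I would apply the coarea-formula (Lemma~\ref{coareaformula}) to each of the three integrals to disintegrate in the level set parameter. This rewrites the inequality as
\begin{align*}
\int_{\mathbb{R}}\Phi'(s)\left[\mathcal{H}^{d-1}(\Sigma_s(t_2))+\int_{t_1}^{t_2}\int_{\Sigma_s(t)}V^2\dd\mathcal{H}^{d-1}(x)\dd t-\mathcal{H}^{d-1}(\Sigma_s(t_1))\right]\dd s\leq 0.
\end{align*}

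Finally, since $\Phi'$ ranges over an arbitrary strictly positive smooth function (say $\Phi'=\varphi$ for $\varphi\in C_c^\infty(\mathbb{R};\mathbb{R}_{>0})$) localized around any chosen value of $s$, the inequality version of the fundamental lemma of the calculus of variations forces the bracketed quantity to be $\leq 0$ for almost every $s\in\mathbb{R}$; for $s<u_{\min}$ both sides are trivially zero by Definition~\ref{OmegaSigma} and Lemma~\ref{epsSol}. The main subtlety here, as opposed to the proof of Theorem~\ref{lvlVariants}, is that one must invoke the inequality variant of the fundamental lemma (which still applies since the set of admissible $\Phi'$ is rich enough to localize to arbitrary Lebesgue points of the bracket), and that all terms in the bracket must be shown to be $\mathcal{L}^1$-measurable functions of $s$, which is built into the coarea-formula. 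I expect no further obstacle, and the argument concludes with~\eqref{lvlphiDeriv}.
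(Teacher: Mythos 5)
Your proposal is correct and follows essentially the same route as the paper: relabel with $\Phi\circ u$ via Lemma~\ref{Relabeling}, apply Theorem~\ref{DeGiorgiViscosity} to the relabeled solution, disintegrate all three terms with the coarea-formula, and conclude via a one-sided fundamental lemma of the calculus of variations. One small slip: $C_c^\infty(\mathbb{R};\mathbb{R}_{>0})$ is empty (a compactly supported function cannot be strictly positive everywhere); you should instead take $\Phi'=\varphi$ with $\varphi$ a nonnegative bump plus a small constant $\delta>0$ to respect $\Phi'>0$ in Lemma~\ref{Relabeling}, then let $\delta\searrow 0$, or, as the paper does in the analogous step of Theorem~\ref{lvlVariants}, take $\varphi\in C_b^\infty(\mathbb{R};\mathbb{R}_{>0})$.
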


\begin{proof}
This proof is similar to the previous theorem. We use the coarea-formula in Equation~\eqref{phiDeriv} to show that for almost every level set the difference of the sides of the inequality has a value of less or equal to zero:

By the relabeling property for $\Phi\in C^\infty(\mathbb{R})$ with $\Phi'\geq 0$ applied to Equation~\eqref{phiDeriv} one gets:
\begin{align*}
    0&\geq \left.\int\limits_{\mathbb{R}^d} \Phi'\circ u|\nabla u|\dd x\right|_{t_2}\left.+\int\limits_{t_1}^{t_2}\int\limits_{\mathbb{R}^d}\Phi'\circ uV^2|\nabla u|\dd x\dd t-\int\limits_{\mathbb{R}^d}\Phi'\circ u|\nabla u|\dd x\right|_{t_1}\\
    &=\int\limits_\mathbb{R}\Phi'(s)\left(\mathcal{H}^{d-1}(\Sigma_s(t_2))+\int\limits_{t_1}^{t_2}\int\limits_{\Sigma_s} V^2\dd\mathcal{H}^{d-1}(x)\dd t-\mathcal{H}^{d-1}(\Sigma_s(t_1))\right)\dd s.
\end{align*}
Since $\Phi'\geq 0$ and by the fundamental lemma of the calculus of variations, this yields the desired inequality.
\end{proof}

To prove now that almost every level set is a distributional solution, some well-posedness conditions have to be satisfied. The following lemma shows that our sets are sets of finite perimeter and that $u$ is differentiable a.e.\ on these, which is stated in the following lemma that can be found in \cite{ESIV} as Lemma 6.1.

\begin{lemma}
For every time $0\leq t\leq T$ and almost every level set value $s\in\mathbb{R}$ the viscosity solution $u$ is differentiable $\mathcal{H}^{d-1}$-almost everywhere on $\Sigma_s$ with non-vanishing gradient $|\nabla u|\neq 0$.\label{aeDiff}
\end{lemma}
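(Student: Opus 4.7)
The plan is to reduce the statement to Rademacher's theorem combined with the Federer coarea formula, applied at each fixed time.

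First, fix $t\in[0,T]$ and observe that $u(\cdot,t)$ is Lipschitz on $\mathbb{R}^d$: the uniform $L^\infty$-bound on $\nabla u_\varepsilon$ from Lemma~\ref{epsSol} passes to the limit via the weak-$*$ convergence in Theorem~\ref{conv}. Hence Rademacher's theorem supplies a Lebesgue-null set $N\subset\mathbb{R}^d$ outside of which $u(\cdot,t)$ is classically differentiable, and we may fix a Borel representative of $\nabla u(\cdot,t)$ defined on all of $\mathbb{R}^d$ (setting it to zero on $N$). Denote the critical set by $Z:=\{x\in\mathbb{R}^d:|\nabla u(x,t)|=0\}$, which is then Borel.

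Next, apply the coarea formula for Lipschitz functions to $u(\cdot,t)$,
$$\int_E |\nabla u(\cdot,t)| \dd x = \int_{\mathbb{R}} \mathcal{H}^{d-1}\bigl(E\cap\Sigma_s(t)\bigr)\dd s,$$
valid for every Borel $E\subset\mathbb{R}^d$, and evaluate it in turn on $E=N$ and $E=Z$. In the first case the left-hand side vanishes because $|N|=0$; in the second it vanishes because $|\nabla u(\cdot,t)|\equiv 0$ on $Z$. We conclude $\mathcal{H}^{d-1}(N\cap\Sigma_s(t))=0$ and $\mathcal{H}^{d-1}(Z\cap\Sigma_s(t))=0$ for Lebesgue-a.e.\ $s\in\mathbb{R}$. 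Taking the union of the two exceptional $s$-null sets yields that for a.e.\ $s$, the set $\Sigma_s(t)\cap(N\cup Z)$ is $\mathcal{H}^{d-1}$-negligible, which is precisely the assertion of the lemma.

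I do not anticipate any serious obstacle: no input from the PDE is used beyond the Lipschitz regularity of $u(\cdot,t)$, and the argument is a textbook combination of Rademacher and coarea. The only point requiring mild care is confirming that the exceptional $s$-set can be chosen independently of $N$ and $Z$ simultaneously, which is trivial since the union of two Lebesgue-null subsets of $\mathbb{R}$ is again null. Note also that the statement is formulated at a single fixed time $t$, so no joint-in-$(x,t)$ measurability questions intervene.
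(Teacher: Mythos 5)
Your argument is correct and coincides with the paper's proof: both rely on the fixed-time Lipschitz regularity of $u$, invoke Rademacher implicitly or explicitly, and then apply the coarea formula to the indicator of the bad set $\{u \text{ not differentiable}\}\cup\{\nabla u=0\}$ (which the paper handles in one step rather than splitting into $N$ and $Z$, a purely cosmetic difference). No gaps.
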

\begin{proof}
For fixed $0\leq t\leq T$, the function $u$ is Lipschitz and thus by the coarea-formula for $\zeta:\mathbb{R}^d\to \mathbb{R}$ integrable one has
\begin{align*}
    \int\limits_{\mathbb{R}^d}\zeta |\nabla u|\dd x&=\int\limits_{\mathbb{R}}\int\limits_{\Sigma_s} \zeta\dd\mathcal{H}^{d-1}\dd s.
\end{align*}
For $\zeta=\chi_{A}, A:=\{u \text{ not diff. or } \nabla u=0\}$ it yields $\mathcal{H}^{d-1}(A\cap\Sigma_s)=0$.
\end{proof}

The last ingredient which is needed to prove Theorem~\ref{Thm:MainThm} is a way to connect the null sets that come from Theorem~\ref{lvlphiDerivthm} and Lemma~\ref{aeDiff}. This is necessary to achieve the optimal energy dissipation in \eqref{def:BVEnergyDissipation} for almost any time $T'$ and is done using the following lemma.

\begin{lemma}[$L^1$-continuity in time of BV solutions]
    Let $(\Omega(t))_{t\in [0,T)}$ be a distributional solution to mean curvature flow according to Definition~\ref{Def:BVsol}. Then, $\Omega_t$ is continuous in time in the $L^1$-norm for almost every time, i.e., for a.e.\ $t\in[0,T)$ there exists a sequence $t_n\to t$ s.t.\ $|\Omega_{t_n}\Delta\Omega_t|\to 0$.
    \label{Lemma:L1Cont}
\end{lemma}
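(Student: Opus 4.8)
The plan is to deduce the $L^1$-continuity in time from the distributional formulation of the normal velocity, Equation~\eqref{Eq:LSdistV}, together with the $L^2$-bound~\eqref{Eq:LSV} and the uniform perimeter bound~\eqref{Eq:LSP}. The key observation is that \eqref{Eq:LSdistV}, read in the time variable, says that $t\mapsto\int_{\Omega(t)}\zeta\,\dd x$ has a weak time derivative for every fixed spatial test function $\zeta\in C_c^1(\mathbb{R}^d)$, namely $t\mapsto-\int_{\mathbb{R}^d}\zeta V\,\dd|\mu_\Omega(t)|$. By Cauchy--Schwarz this derivative is bounded in $L^1_t$: indeed
\[
\int_0^T\Big|\int_{\mathbb{R}^d}\zeta V\,\dd|\mu_\Omega(t)|\Big|\dd t
\leq \norm{\zeta}_{L^\infty}\int_0^T\int_{\mathbb{R}^d}|V|\,\dd|\mu_\Omega(t)|\dd t
\leq \norm{\zeta}_{L^\infty}\Big(\int_0^T\!\!\int V^2\dd|\mu_\Omega(t)|\dd t\Big)^{1/2}\Big(T\,\esssup_t P(\Omega(t))\Big)^{1/2}<\infty.
\]
Hence $t\mapsto\int_{\Omega(t)}\zeta\,\dd x$ lies in $W^{1,1}(0,T)$ and therefore has an absolutely continuous representative; in particular it is continuous at almost every $t$.

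Next I would upgrade this scalar continuity to $L^1$-continuity of the sets. Fix a countable family $\{\zeta_k\}_{k\in\mathbb{N}}\subset C_c^1(\mathbb{R}^d)$ that is dense in $C_c(\mathbb{R}^d)$ (for the locally uniform topology), e.g. on each ball $B_R$. By the previous step, for each $k$ there is a null set $N_k\subset[0,T)$ off which $t\mapsto\int_{\Omega(t)}\zeta_k\,\dd x$ is continuous; set $N:=\bigcup_k N_k$, still a null set. Fix $t\notin N$ and a sequence $t_n\to t$. The characteristic functions $\chi_{\Omega(t_n)}$ are uniformly bounded in $L^\infty$ and, because $\bigcup_t\Omega(t)$ is measurable with the whole evolution contained in a fixed large ball (this follows since the viscosity solution is constant outside a ball, so all relevant level sets have compact support — or one simply works with $\chi_{\Omega(t_n)\cap B_R}$), they are precompact in the weak-$*$ topology of $L^\infty(B_R)$; by testing against the dense family $\{\zeta_k\}$ and using continuity at $t$, every weak-$*$ limit point equals $\chi_{\Omega(t)}$, so in fact $\chi_{\Omega(t_n)}\overset{*}{\rightharpoonup}\chi_{\Omega(t)}$ in $L^\infty(B_R)$ along the full sequence. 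Finally, weak-$*$ convergence of characteristic functions to a characteristic function implies strong $L^1$-convergence: writing $\|\chi_{\Omega(t_n)}-\chi_{\Omega(t)}\|_{L^1(B_R)}^2\leq |B_R|\,\|\chi_{\Omega(t_n)}-\chi_{\Omega(t)}\|_{L^1(B_R)}$ and using that $\int_{B_R}\chi_{\Omega(t_n)}\chi_{\Omega(t)}\to\int_{B_R}\chi_{\Omega(t)}$ and $\int_{B_R}\chi_{\Omega(t_n)}\to\int_{B_R}\chi_{\Omega(t)}$ gives $\int_{B_R}\chi_{\Omega(t_n)}\chi_{\Omega(t)^c}\to 0$ and $\int_{B_R}\chi_{\Omega(t_n)^c}\chi_{\Omega(t)}\to 0$, i.e. $|\Omega(t_n)\triangle\Omega(t)|\to 0$.

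I would carry out the steps in this order: (1) establish the $W^{1,1}$-in-time bound for $t\mapsto\int_{\Omega(t)}\zeta\,\dd x$ from \eqref{Eq:LSdistV}, \eqref{Eq:LSV}, \eqref{Eq:LSP}; (2) extract an at-most-countable exceptional set using a countable dense family of test functions; (3) at each good time, run the weak-$*$ precompactness argument, identify the limit, and promote weak-$*$ to strong $L^1$ via the characteristic-function trick. The main technical point to watch is the confinement: one needs all the $\Omega(t)$ to sit in a common bounded set (or to localize everything to balls $B_R$ and let $R\to\infty$ through a countable sequence) so that bounded sequences of characteristic functions are genuinely weak-$*$ precompact and the $L^1$-difference is finite; this is where the standing hypothesis that the initial level set data $g$ is constant outside a ball is used. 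Everything else is soft functional analysis. Care must also be taken with the initial-time term in \eqref{Eq:LSdistV}, but since we only claim continuity at a.e.\ $t$ (equivalently, at every Lebesgue point of the absolutely continuous representative), the endpoint $t=0$ plays no special role.
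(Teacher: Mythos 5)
Your proof is correct and rests on the same mechanism as the paper's: testing \eqref{Eq:LSdistV} with product test functions $\varphi(t)\eta(x)$ and using Cauchy--Schwarz together with \eqref{Eq:LSV} and \eqref{Eq:LSP} to control the time variation of $t\mapsto\int_{\Omega(t)}\eta\,\dd x$, then upgrading this scalar information to $L^1$-continuity of the sets. The packaging differs in both halves. For the time regularity, the paper works quantitatively: it plugs in an explicit piecewise-linear mollifier in time, invokes the Lebesgue point theorem, and keeps the factor $\sqrt{t_1-t_0}$, which yields a $\tfrac12$-H\"older modulus for $t\mapsto\int_{\Omega(t)}\eta\,\dd x$ uniformly in $0\leq\eta\leq1$; your $W^{1,1}$/absolutely-continuous-representative argument is softer and purely qualitative (although your own Cauchy--Schwarz estimate in fact gives $W^{1,2}$, hence the same H\"older bound). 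For the upgrade to sets, the paper simply takes the supremum over $\eta$, using that the bound is uniform in $\eta$, to control $|\Omega(t_0)\Delta\Omega(t_1)|$ directly; you instead go through weak-$*$ precompactness of $\chi_{\Omega(t_n)}$ and the standard fact that weak-$*$ convergence of characteristic functions to a characteristic function is strong in $L^1$. Both work; the paper's route is shorter and gives a rate, yours avoids the explicit mollifier. One imprecision to fix: after extracting the null set $N$, you take an \emph{arbitrary} sequence $t_n\to t$. Since the distributional formulation determines $\Omega(\cdot)$ only up to a null set of times, $\int_{\Omega(t_n)}\zeta_k\,\dd x$ need not converge to $\int_{\Omega(t)}\zeta_k\,\dd x$ along arbitrary sequences --- continuity is a property of the absolutely continuous representative, not of the pointwise-defined function. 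You must choose $t_n$ in the full-measure set $[0,T)\setminus N$ (which is dense, so such a sequence exists); this is exactly what the statement of the lemma allows, and it is the same restriction to ``good'' (Lebesgue-point) times that appears in the paper's proof.
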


\begin{proof}
    By approximation with $C^1_c(\mathbb{R}^d\times[0,T))$-functions, we can plug in the following $C_c(\mathbb{R}^d\times[0,T))$ function since $\Omega(t)$ is uniformly bounded:
    $$\zeta(t,x)\coloneqq \zeta_{t_0,n}(t)\eta(x), \zeta_{t_0,n}(t)\coloneqq\begin{cases}
        1 &\text{for }0\leq t\leq t_0,\\
        1-n(t-t_0) &\text{for }t_0\leq t\leq t_0+\frac{1}{n},\\
        0 &\text{else}.
    \end{cases}$$
    and $\eta\in C_c^\infty(\mathbb{R}), 0\leq \eta\leq 1$. Plugging in this test function in \eqref{Eq:LSdistV}, we achieve:
    \begin{align*}
        \fint\limits_{t_0}^{t_0+\frac{1}{n}}\int\limits_{\Omega(t)}\eta(x)\dd x\dd t&=\int\limits_{\mathbb{R}^d\times(0,T)}\zeta_{t_0,n}(t)\eta(x)V(x,t)\dd|\mu_\Omega(t)|\dd t+\int\limits_{\Omega(0)}\zeta(x,0)\dd x.
    \end{align*}
    By the Lebesgue point theorem, the left-hand side converges for almost every time $t_0$ as $n\to \infty$. Thus, by subtracting this equation with two such times $t_1$ and $t_0$, $t_1\geq t_0$, we get:
    \begin{align*}
        \Bigg|\int\limits_{\Omega(t_0)}\eta(x)\dd x&-\int\limits_{\Omega(t_1)}\eta(x)\dd x\Bigg|\\
        &\leq \limsup\limits_{n\to \infty}\left|\int\limits_{t_0}^{t_1+\frac{1}{n}}(\zeta_{t_0,n}-\zeta_{t_1,n})\int\limits_{\mathbb{R}^d}\eta(x)V(x,t)\dd|\mu_\Omega(t)|\dd t\right|\\
        &\leq \sqrt{t_1-t_0}\left(\int\limits_{\mathbb{R}^d\times (0,T)}V^2\dd |\mu_\Omega(t)|\dd t\right)^\frac{1}{2}.
    \end{align*}
    By \eqref{Eq:LSV}, the right-hand side is bounded. Hence, we can choose a sequence s.t.\ $t_1\to t_0$. This implies the continuity when one considers the supremum over all $\eta$:
    $$|\Omega(t_0)\Delta \Omega(t_1)|\to 0$$
    when $t_1\to t_0$ for almost every $t_0\in [0,T)$.
\end{proof}

\begin{proof}[Proof of Theorem~\ref{Thm:MainThm}]

Let $u\in C(\mathbb{R}^d\times (0,T))$ be a viscosity solution to the level set equation~\eqref{Eq:lvleq} of mean curvature flow with well-prepared initial data $g\in C^3(\mathbb{R}^d)$. 
Then, by Inequality~\eqref{lvlphiDeriv} in Theorem~\ref{lvlphiDerivthm} the $L^2$-bound \eqref{Eq:LSV} of $V$ is satisfied (when plugging in $t_1=0$ and $t_2=T$). This is due to the fact that for almost all level sets the reduced boundary of the super level sets is the level set $\partial^*\Omega_s(t)=\Sigma_s(t)$.

Moreover, the distributional characterizations~\eqref{Eq:LSdistV} and~\eqref{LSdistMC} of the normal velocity $V$ and the mean curvature $H$ were proven in Theorem~\ref{lvlVariants} in Equations~\eqref{lvlV} and \eqref{lvlMC}, respectively.

So far, everything is true up to a null sets of level set values as we only used the previous theorems for a finite amount of time values. Next, we want to show the sharp energy dissipation \eqref{def:BVEnergyDissipation}. For this, we can plug in $t_1=0, t_2=T'_n$ into Inequality~\eqref{lvlphiDeriv} in Theorem~\ref{lvlphiDerivthm} for a dense countable amount of final times $T'_n$.
Combining this with Lemma~\ref{Lemma:L1Cont} and the lower semi continuity of the perimeter, we can infer \eqref{def:BVEnergyDissipation} by considering a sequence $T'_n\searrow T'$:
\begin{align*}
P(\Omega(T'))&+\int\limits_0^{T'}\int\limits_{\mathbb{R}^d}V^2\dd|\mu_\Omega(t)|\dd t\\
&\leq \liminf\limits_{n\to\infty}P(\Omega(T'_n))+\int\limits_0^{T'}\int\limits_{\mathbb{R}^d}V^2\dd|\mu_\Omega(t)|\dd t\leq P(\Omega(0)).
\end{align*} 
Moreover, we can infer \eqref{Eq:LSP} due to the monotonicity in a similar way. For this, we again take the dense countable set of times in which $\Omega(t)$ is $L^1$-continuous according to Lemma~\ref{Lemma:L1Cont} and $P(\Omega(t))=\mathcal{H}^{d-1}(\Sigma(t))$. By the lower semi continuity of the perimeter, we can use Inequality~\eqref{lvlphiDeriv} to conclude the uniform perimeter bound \eqref{Eq:LSP}.
This proves the claim.
\end{proof}

\section{Extensions and open problems}
\subsection{Extension to homogeneous Neumann boundary conditions}
\label{homogNeumann}

In this paper, we worked in the setting of $\mathbb{R}^d$ with initial data which are constant outside a ball. One could also consider the case of homogeneous Neumann data, i.e., the surface $\Sigma$ satisfies $V=-H$ and is orthogonal to a given fixed domain boundary $\Sigma\perp \partial D$.

Similarly to before, one can define level set solutions as well as viscosity solutions and an analogous vanishing viscosity approximation, cf.~\cite{SatoruNeumann}. 
The level set equation becomes
\begin{align}
    \begin{cases}
        \partial_t u=\Delta u-\frac{\nabla u}{|\nabla u|}\cdot \nabla^2 u\frac{\nabla u}{|\nabla u|}   &\text{in }D\times (0,T),\\
        \nabla u\cdot \nu_{\partial D}=0&\text{on }\partial D\times (0,T),\\
        u=g&\text{on }D\times \{t=0\}.
    \end{cases}
    \label{Eq:level setEquationNeumann}
\end{align}
Here, the initial data $g$ also has to satisfy the boundary condition $\nabla g\cdot\nu_{\partial D}=0$. Now, the vanishing viscosity approximation can be constructed in the same way satisfying the condition $\nabla u_\varepsilon\cdot \nu_{\partial D}=0$. 
The major difference in this context lies in the formulation of the distributional solution. 
To encode the boundary conditions, we use again integration by parts, see also~\cite{HenselLaux_Neumann}.
Indeed, the homogeneous Neumann condition $\nu_{\partial D} \cdot \nabla u =0$ can be encoded by asking that for any test vector field $\xi\in C_c^1(\overline{D}\times (0,T);\mathbb{R}^d)$ with $\xi\cdot\nu_{\partial D}=0$ it holds
\begin{align*}
    \int\limits_0^T\int\limits_{\mathbb{R}^d}\left(\nabla \cdot \xi-\nu_u\cdot \nabla\xi\nu_u\right)|\nabla u|\dd x\dd t=-\int\limits_0^T\int\limits_{\mathbb{R}^d} \xi\cdot\nu_u V|\nabla u|\dd x\dd t.
\end{align*}

\begin{thm}[Viscosity solutions are distributional solutions --- Neumann conditions]
    Let $D$ be a smooth domain in $\mathbb{R}^d$ and $u\in C(D\times [0,T))$ be a viscosity solution to the level set mean curvature flow with Neumann condition \eqref{Eq:level setEquationNeumann} starting with well-prepared initial data $g$. I.e., $g$ satisfies Definition~\ref{def:wellprepared} and $\nabla g\cdot \nu_{\partial D}=0$. 
    Then, almost every level set of $u$ is a BV solution to mean curvature flow in the sense of Definition~\ref{Def:BVsol} where the assumption on $\xi$ for Equation~\eqref{LSdistMC} is replaced by $\xi\in C_c^1(\overline{D}\times (0,T);\mathbb{R}^d)$ with $\xi\cdot\nu_{\partial D}=0$.
\end{thm}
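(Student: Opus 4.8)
The plan is to rerun the proof of Theorem~\ref{Thm:MainThm} essentially verbatim, with $\mathbb{R}^d$ replaced by $D$ throughout and with the class of admissible test vector fields narrowed to those $\xi\in C_c^1(\overline D\times(0,T);\mathbb{R}^d)$ that are tangent to the wall, $\xi\cdot\nu_{\partial D}=0$. The only genuinely new ingredient is to re-establish the a priori estimates of Section~3 in the presence of the homogeneous Neumann condition; granting these, the distributional identities, the sharp energy dissipation, the disintegration in the level-set parameter, and the final assembly all carry over with only cosmetic changes.

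\emph{Step 1: a priori estimates.} By~\cite{SatoruNeumann}, the viscous approximation~\eqref{epsAppr} posed on $D\times(0,T)$ together with $\nabla u_\varepsilon\cdot\nu_{\partial D}=0$ admits a unique smooth solution, and the barrier argument behind Lemma~\ref{epsSol} still yields uniform bounds on $|u_\varepsilon|,|\nabla u_\varepsilon|,|\partial_t u_\varepsilon|$. The crucial point is the Neumann analogue of Theorem~\ref{strangeEst}. Differentiating $t\mapsto\int_D|H_\varepsilon(\cdot,t)|\,\dd x$ and using the Bochner-type computation behind~\cite[Theorem 2.2]{ESIV} now produces boundary contributions on $\partial D$; each of these carries a factor $\nu_\varepsilon\cdot\nu_{\partial D}=-(\nabla u_\varepsilon\cdot\nu_{\partial D})/\sqrt{|\nabla u_\varepsilon|^2+\varepsilon^2}=0$, together with a term involving the second fundamental form of $\partial D$ that has a favorable sign (at least when $D$ is convex). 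Hence $t\mapsto\int_D|H_\varepsilon(\cdot,t)|\,\dd x$ is again non-increasing and therefore uniformly bounded; the compensated-compactness step producing Theorem~\ref{conv} and the local estimate of Lemma~\ref{HsqrtEst} then go through unchanged on $D$.

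\emph{Step 2: distributional identities and energy dissipation.} As in the proof of Theorem~\ref{distViscosity}, Lemma~\ref{HsqrtEst} and Theorem~\ref{conv} give $\partial_t u=V|\nabla u|$ a.e.\ on $D\times(0,T)$, hence the Neumann analogue of~\eqref{1distV} for $\zeta\in C_c^1(\overline D\times[0,T))$. For the mean-curvature identity one tests~\eqref{epsAppr}, written as $\partial_t u_\varepsilon=-H_\varepsilon\sqrt{|\nabla u_\varepsilon|^2+\varepsilon^2}$, against $\xi\cdot\nu_\varepsilon$ for admissible $\xi$ and integrates by parts over $D$ (cf.~\cite{HenselLaux_Neumann}); every boundary term on $\partial D$ vanishes because it carries a factor $\xi\cdot\nu_{\partial D}$ or $\nu_\varepsilon\cdot\nu_{\partial D}$, both zero, which leaves the $\varepsilon$-version of the boundary-adapted identity stated before the theorem, and $\varepsilon\searrow0$ via Theorem~\ref{conv} and Lemma~\ref{HsqrtEst} concludes. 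An analogous integration by parts gives $\frac{\dd}{\dd t}\int_D\sqrt{|\nabla u_\varepsilon|^2+\varepsilon^2}\,\dd x=-\int_D H_\varepsilon^2\sqrt{|\nabla u_\varepsilon|^2+\varepsilon^2}\,\dd x$ (the boundary term again dropping since $\nu_\varepsilon\cdot\nu_{\partial D}=0$), whence in the limit the optimal energy dissipation of Theorem~\ref{DeGiorgiViscosity} holds with $\int_{\mathbb{R}^d}$ replaced by $\int_D$.

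\emph{Step 3: disintegration and assembly.} The relabeling Lemma~\ref{Relabeling} remains valid: for $\Phi\in C^\infty(\mathbb{R})$ with $\Phi'>0$, $\Phi\circ u$ is the viscosity solution of the Neumann problem with data $\Phi\circ g$, and $\Phi\circ g$ is still well-prepared and satisfies $\nabla(\Phi\circ g)\cdot\nu_{\partial D}=(\Phi'\circ g)\,\nabla g\cdot\nu_{\partial D}=0$. Therefore the coarea- and layercake-arguments of Theorems~\ref{lvlVariants} and~\ref{lvlphiDerivthm} apply word for word and yield, for a.e.\ level-set value $s$, the distributional normal-velocity identity~\eqref{Eq:LSdistV}, the modified mean-curvature identity~\eqref{LSdistMC} with the restricted test-field class, and the two-time energy dissipation, where now the level sets $\Sigma_s(t)=\{x\in D:u(x,t)=s\}$ and the perimeter are understood relative to $D$. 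Finally, Lemma~\ref{Lemma:L1Cont} uses only~\eqref{Eq:LSdistV} and a test function of product form $\zeta_{t_0,n}(t)\eta(x)$, so it transfers unchanged, and one assembles the conclusion exactly as in the proof of Theorem~\ref{Thm:MainThm}: take a countable dense set of final times, use $L^1$-in-time continuity to pass $T_n'\searrow T'$, and invoke lower semicontinuity of the (relative) perimeter. The main obstacle, and the only place where the geometry of $D$ really matters, is Step~1 --- controlling the boundary contributions in the monotonicity of $\int_D|H_\varepsilon|$, which is where one likely needs convexity of $D$ (or a sign condition on its second fundamental form) and which should be extractable from~\cite{SatoruNeumann}.
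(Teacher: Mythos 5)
Your proposal follows essentially the same route as the paper: the integration by parts over $D$ with the boundary term killed by $\xi\cdot\nu_{\partial D}=0$ is exactly the computation the paper carries out, and the disintegration/assembly steps transfer verbatim as you say. The only difference is that where you sketch the monotonicity of $\int_D|H_\varepsilon|$ yourself (and hedge about convexity of $D$), the paper simply imports the a priori estimates --- integrability of the approximate mean curvature, the convergences, and the $L^2$-bound with coarea factor --- from the cited Neumann-boundary reference, so that concern is absorbed into the reference rather than resolved in the paper.
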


The weak formulation follows from the following calculation (in which we suppress the $t$ dependence) for the vanishing viscosity approximation
\begin{align*}
    -\int\limits_{D} \xi\cdot\nu_\varepsilon \partial_tu_\varepsilon\dd x&=-\int\limits_{D} \xi\cdot\nabla u_\varepsilon\nabla\cdot \nu_\varepsilon\dd x\\
    &=\int\limits_{D} \nu_\varepsilon\cdot \nabla\xi \nabla u_\varepsilon \dd x+\int\limits_{D} \xi \cdot \nabla^2 u_\varepsilon \nu_\varepsilon \dd x\\
    &\quad-\int\limits_{\partial D}\xi\cdot\nabla u_\varepsilon \nu_\varepsilon \cdot \nu_{\partial D}\dd \mathcal{H}^{d-1}(x)\\
    &=\int\limits_{D} \nu_\varepsilon\cdot \nabla\xi \nabla u_\varepsilon \dd x
    -\int\limits_{D} \xi \cdot \nabla\sqrt{|\nabla u_\varepsilon|^2+\varepsilon^2} \dd x\\
    &=\int\limits_{D} \nu_\varepsilon\cdot \nabla\xi \nabla u_\varepsilon \dd x+\int\limits_{D} \nabla\cdot\xi  \sqrt{|\nabla u_\varepsilon|^2+\varepsilon^2} \dd x\\
    &\quad-\int\limits_{\partial D}\xi \cdot \nu_{\partial D}\sqrt{|\nabla u_\varepsilon|^2+\varepsilon^2}\dd \mathcal{H}^{d-1}(x)\\
    &=\int\limits_{D}\left(\nabla \cdot \xi-\nu_\varepsilon\cdot \nabla\xi\nu_\varepsilon\right)\sqrt{|\nabla u_\varepsilon|^2+\varepsilon^2}\dd x\\
    &\quad-\int\limits_{\partial D}\xi\cdot\nu_{\partial D}\sqrt{|\nabla u_\varepsilon|^2+\varepsilon^2}\dd \mathcal{H}^{d-1}(x).
\end{align*}
For test vector fields $\xi$ with $\xi\cdot \nu_{\partial D}=0$, the additional boundary term vanishes exactly. Then, one can pass to the limit $\varepsilon\searrow 0$.
With these changes, we can modify the proof to show Theorem~\ref{Thm:MainThm} for homogeneous Neumann conditions. 
Due to~\cite[Lemma 2.3]{SatoruNeumann}, the existence of well-prepared initial data to a $0$-level set which meets $\partial D$ orthogonally is ensured.
Moreover, in this paper, the integrability of the approximate mean curvature as well as the convergences and the $L^2$-integrability with coarea-factor is shown, cf.~\cite[Theorem 3.2, Lemma 3.3 and Lemma 3.4]{SatoruNeumann}. Theorems~\ref{distViscosity} and \ref{DeGiorgiViscosity} follow with the same reasoning with the sole difference that the convergences now use that we are on the bounded domain $D$.

\subsection{Further directions and open questions}

We showed that almost every level set of the viscosity solution is a BV solution. 
It remains an interesting open question whether the inverse assertion holds.
Let $g$ be well-prepared initial data and $u\in C(\mathbb{R}^d\times (0,T))$ such that every level set of $u$ is a distributional solution to mean curvature flow. Is $u$ then a viscosity solution to mean curvature flow? By the weak-strong uniqueness result in~\cite{FHLS}, this is true as long as a classical solution exists. However, after the onset of singularities this is still an open question.

A further interesting generalization would be to incorporate non-ho\-mo\-ge\-neous Neumann condition, i.e., $\frac{\nabla u}{|\nabla u|}\cdot \nu_D=\cos \alpha$. 
Geometrically speaking, this fixes the angle between the free interface and the boundary $\partial D$. 
The angle condition is the result of a boundary energy, which enforces the angle condition via Young's law. 
For similar results regarding the phase-field approximation, we refer to~\cite{HenselLaux_Neumann}.

It seems feasible to generalize our present proof to the anisotropic case, just as Tonegawa~{\cite{TonegawaAnisotropic}} generalized the proof of Evans and Spruck~\cite{ESIV} to the anisotropic setting.

\section{Appendix}

The following theorem is taken from~\cite[Part 2]{MaggiSofP} and gives a characterization of the distributional derivative of characteristic functions.
\begin{thm}[De Giorgi's structure theorem]
\label{DeGiorgiStructure}
Let $A$ be a set of finite perimeter. Then the distributional derivative of its characteristic function can be expressed using the generalized outer unit normal vector and the reduced boundary via $\nabla\chi_A=\nu^A\mathcal{H}^{d-1}\lfloor_{\partial^*A}$.
\end{thm}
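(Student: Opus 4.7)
The plan is to follow De Giorgi's classical four-step strategy: polar decomposition of the Gauß--Green measure, identification of the reduced boundary, blow-up analysis at reduced boundary points, and a covering argument to match $|\nabla\chi_A|$ with $\mathcal{H}^{d-1}$. First, since $\nabla\chi_A$ is an $\mathbb{R}^d$-valued Radon measure of finite total variation, I would invoke the polar (Radon--Nikodym) decomposition for vector-valued measures to write $\nabla\chi_A=\nu^A\,|\nabla\chi_A|$ for some Borel-measurable unit vector field $\nu^A$ defined $|\nabla\chi_A|$-a.e.

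Next, I would define the reduced boundary $\partial^* A$ as the set of $x$ such that $|\nabla\chi_A|(B_r(x))>0$ for every $r>0$, the mean $\fint_{B_r(x)}\nu^A\,d|\nabla\chi_A|$ has a limit as $r\searrow 0$, and this limit is a unit vector (which we then call $\nu^A(x)$). By Lebesgue's differentiation theorem for Radon measures, $|\nabla\chi_A|$ is already concentrated on $\partial^* A$, so the main task reduces to identifying $|\nabla\chi_A|\lfloor_{\partial^* A}$ with $\mathcal{H}^{d-1}\lfloor_{\partial^* A}$.

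The heart of the proof is the blow-up. For $x\in\partial^* A$, consider the rescaled sets $A_{x,r}:=(A-x)/r$ and their characteristic functions $\chi_{A_{x,r}}\in\BV_{\mathrm{loc}}(\mathbb{R}^d)$. Using the scaling $|\nabla\chi_{A_{x,r}}|(B_R)=r^{1-d}|\nabla\chi_A|(B_{rR}(x))$ together with the relative isoperimetric inequality and BV-compactness, one extracts a subsequential $L^1_{\mathrm{loc}}$-limit $\chi_E$. The defining property of the reduced boundary forces the averaged normal of the limit to equal $\nu^A(x)$ with modulus one, which, combined with the lower semicontinuity of perimeter and a calibration argument using the constant vector field $\nu^A(x)$, implies $E=\{y:\,y\cdot\nu^A(x)\le 0\}$. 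This is the step I expect to be the main obstacle, as excluding any other geometric limit demands a delicate interplay between compactness, lower semicontinuity, and the Cauchy--Schwarz saturation encoded by the unit-norm condition on the averaged normal.

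Finally, the half-space blow-up yields the density
\begin{equation*}
\lim_{r\searrow 0}\frac{|\nabla\chi_A|(B_r(x))}{\omega_{d-1}r^{d-1}}=1\qquad\text{for every }x\in\partial^* A.
\end{equation*}
A Vitali-type covering argument then upgrades this pointwise density identity to the measure identity $|\nabla\chi_A|\lfloor_{\partial^* A}=\mathcal{H}^{d-1}\lfloor_{\partial^* A}$; concentration of $|\nabla\chi_A|$ on $\partial^* A$ was already noted above. Combining this with the polar decomposition gives the asserted formula $\nabla\chi_A=\nu^A\,\mathcal{H}^{d-1}\lfloor_{\partial^* A}$. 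For a complete and polished exposition I would simply refer to Maggi's book, which is the source cited here.
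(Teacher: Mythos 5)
Your proposal follows De Giorgi's classical four-step argument --- polar decomposition of $\nabla\chi_A$, identification of the reduced boundary via Lebesgue points of the averaged normal, blow-up to half-spaces using the isoperimetric inequality and BV-compactness, and a Vitali covering argument upgrading the density $\lim_{r\searrow 0}|\nabla\chi_A|(B_r(x))/(\omega_{d-1}r^{d-1})=1$ to the measure identity $|\nabla\chi_A|\lfloor_{\partial^*A}=\mathcal{H}^{d-1}\lfloor_{\partial^*A}$ --- which is precisely the exposition in Maggi's book, the source the paper cites; the paper itself gives no proof. One small sign remark worth fixing: with $\nu^A$ defined as the polar (Radon--Nikodym) density of $\nabla\chi_A$, the model computation $\nabla\chi_{\{y_d<0\}}=-e_d\,\mathcal{H}^{d-1}\lfloor\{y_d=0\}$ shows that $\nu^A$ is the \emph{inner} normal, so the blow-up at $x\in\partial^*A$ is the half-space $\{y:\,y\cdot\nu^A(x)\ge 0\}$ rather than $\le 0$. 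Maggi sidesteps this by working with the Gau{\ss}--Green measure $\mu_A=-\nabla\chi_A$, whose polar density is the outer normal, and the theorem as stated in the paper inherits the same sign tension, so this is a convention to reconcile at the outset rather than a flaw in your plan.
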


The coarea-formula can also be found in~\cite[Chapter 13]{MaggiSofP}.
\begin{lemma}[Coarea-formula]
\label{coareaformula}
Let $u:\mathbb{R}^d\to \mathbb{R}$ be Lipschitz, $\Omega\subseteq\mathbb{R}^d$ open and $f\in L^1(\mathbb{R}^d)$ integrable. Then
\begin{align*}
\int\limits_\Omega |\nabla u(x)|f(x)\dd x&=\int\limits_\mathbb{R}\int\limits_{\{x\in\Omega:u(x)=s\}}f(x)\dd\mathcal{H}^{d-1}(x)\dd s.
\end{align*}
\end{lemma}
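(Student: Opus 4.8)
The plan is to recognize both sides of the claimed equality as integrals against positive Borel measures, thereby reducing to an identity on Borel sets, and then to prove that identity by treating the critical and the regular part of $u$ separately. First I would introduce the two set functions $\mu(A):=\int_{A\cap\Omega}|\nabla u|\dd x$ and $\nu(A):=\int_{\mathbb{R}}\mathcal{H}^{d-1}(A\cap\Omega\cap\{u=s\})\dd s$ on Borel sets $A\subseteq\mathbb{R}^d$. A preliminary point is that $s\mapsto\mathcal{H}^{d-1}(A\cap\Omega\cap\{u=s\})$ is Lebesgue measurable --- the standard slicing-measurability lemma --- so that $\nu$ is a well-defined countably additive measure (by Tonelli); Eilenberg's inequality $\int^{\ast}_{\mathbb{R}}\mathcal{H}^{d-1}(A\cap\{u=s\})\dd s\le C(d)\,(\Lip u)\,\mathcal{L}^d(A)$ then shows $\nu\le C(d)(\Lip u)\,\mathcal{L}^d$, hence every $f\in L^1(\mathbb{R}^d)$ is integrable against $\mu$ and $\nu$ and the lemma is equivalent to $\mu=\nu$. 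By exhaustion it suffices to prove, for every bounded Borel set $A$ (into which $\Omega$ may be absorbed),
\begin{equation*}
\int\limits_{A}|\nabla u|\dd x=\int\limits_{\mathbb{R}}\mathcal{H}^{d-1}\big(A\cap\{u=s\}\big)\dd s.
\end{equation*}

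For the critical part, by Rademacher's theorem $u$ is differentiable a.e., so after discarding a null set I set $Z:=\{\nabla u=0\}$. On $A\cap Z$ the left-hand side vanishes, and it remains to show $\int_{\mathbb{R}}\mathcal{H}^{d-1}(A\cap Z\cap\{u=s\})\dd s=0$. Since $u$ is merely Lipschitz, classical Morse--Sard is unavailable, so I would use the auxiliary-map trick: for $\varepsilon>0$ let $\Phi_\varepsilon\colon\mathbb{R}^d\to\mathbb{R}\times\mathbb{R}^d$, $\Phi_\varepsilon(x):=(u(x),\varepsilon x)$, which is Lipschitz and injective with Jacobian $\varepsilon^{d-1}\sqrt{|\nabla u|^2+\varepsilon^2}$, equal to $\varepsilon^{d}$ on $Z$. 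The area formula gives $\mathcal{H}^{d}(\Phi_\varepsilon(A\cap Z))=\varepsilon^{d}\,\mathcal{L}^d(A\cap Z)$, whereas slicing the image by the hyperplanes $\{s\}\times\mathbb{R}^d$ --- the $s$-slice being $\varepsilon\,(A\cap Z\cap\{u=s\})$, of Hausdorff measure $\varepsilon^{d-1}\mathcal{H}^{d-1}(A\cap Z\cap\{u=s\})$ --- together with Eilenberg's inequality for the coordinate projection yields $\varepsilon^{d-1}\int_{\mathbb{R}}\mathcal{H}^{d-1}(A\cap Z\cap\{u=s\})\dd s\le C(d)\,\varepsilon^{d}\,\mathcal{L}^d(A\cap Z)$. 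Dividing by $\varepsilon^{d-1}$ and letting $\varepsilon\searrow0$ gives the claim.

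For the regular part I would straighten $u$ using the Whitney-type $C^1$-Lusin approximation: for each $\delta>0$ there is $v\in C^1(\mathbb{R}^d)$ with $\mathcal{L}^d(\{u\ne v\}\cup\{\nabla u\ne\nabla v\})<\delta$. On the good set $G:=\{u=v,\ \nabla u=\nabla v\}$ we have $|\nabla u|=|\nabla v|$ and $G\cap\{u=s\}=G\cap\{v=s\}$, so the identity for the Borel set $(A\setminus Z)\cap G$ reduces to the same identity for the $C^1$ function $v$. For $v\in C^1$ the identity is classical: cover $\{\nabla v\ne0\}$ by countably many open sets on each of which, after a rotation, $\partial_{x_d}v\ne0$; on such a set realize the level sets as $C^1$ graphs $x_d=\psi_s(x')$, note that the graph surface element equals $\tfrac{|\nabla v|}{|\partial_{x_d}v|}\dd x'$, and change variables $(x',s)\leftrightarrow(x',x_d)$ through $s=v(x',x_d)$; on $\{\nabla v=0\}$ both sides vanish, the right-hand one because $v(\{\nabla v=0\})$ is Lebesgue-null by Sard for $C^1$ functions into $\mathbb{R}$. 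Finally, letting $\delta\searrow0$ and using that both $\mu$ and $\nu$ are absolutely continuous with respect to $\mathcal{L}^d$ removes the exceptional set, giving the identity on $A\setminus Z$; summing the critical and regular contributions proves the set identity, hence the lemma.

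The step I expect to be the main obstacle is the critical set: for a merely Lipschitz $u$ the Morse--Sard theorem does not apply, and the area-formula argument with $\Phi_\varepsilon$ above is precisely what forces the level slices of $\{\nabla u=0\}$ to be $\mathcal{H}^{d-1}$-negligible for a.e.\ $s$. The $C^1$-Lusin reduction in the regular part and the slicing-measurability lemma are the remaining places needing care, but both are by now standard.
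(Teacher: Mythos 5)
The paper offers no proof of this lemma; it is stated as a classical result with a citation to \cite{MaggiSofP}. Your argument is essentially the standard proof of the Lipschitz coarea formula found there and in Evans--Gariepy: reduction to a measure identity via Eilenberg's inequality, the auxiliary map $\Phi_\varepsilon(x)=(u(x),\varepsilon x)$ with Jacobian $\varepsilon^{d-1}\sqrt{|\nabla u|^2+\varepsilon^2}$ to kill the critical set, and the $C^1$-Lusin approximation combined with the smooth graph/change-of-variables computation for the regular set. The structure and all main steps are sound.

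One justification is wrong, although the step it supports is both true and, as you have set things up, unnecessary: you assert that $v(\{\nabla v=0\})$ is Lebesgue-null ``by Sard for $C^1$ functions into $\mathbb{R}$''. Morse--Sard for maps $\mathbb{R}^d\to\mathbb{R}$ requires $C^d$ regularity, and Whitney's classical example produces a $C^1$ function on $\mathbb{R}^2$ whose set of critical values contains an interval. Fortunately you never need this claim: the set $(A\setminus Z)\cap G$ on which you invoke the $C^1$ identity satisfies $\nabla v=\nabla u\neq 0$, so it is contained in the open set $\{\nabla v\neq 0\}$ where the graph representation applies directly; and if one does want the full $C^1$ identity including the critical set of $v$, the vanishing of $\int_{\mathbb{R}}\mathcal{H}^{d-1}(A\cap\{\nabla v=0\}\cap\{v=s\})\,\dd s$ follows from your own $\Phi_\varepsilon$ argument applied to $v$ rather than from Sard. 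With that justification corrected, the proof is complete.
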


\section*{Statements and Declarations}

The present paper is an extension of the second author's master's thesis at the  University of Bonn.
This project has received funding from the Deutsche Forschungsgemeinschaft (DFG, German Research Foundation) under Germany's Excellence Strategy --- EXC-2047/1 --- 390685813.

\frenchspacing
\bibliographystyle{abbrv}
\bibliography{Referenzen.bib} 
\end{document}